\documentclass[reqno]{amsart}
\usepackage{amsfonts}
\usepackage{amsfonts}
\usepackage{amsfonts}
\usepackage{amsfonts}
\usepackage{amsfonts}
\usepackage{amsfonts}
\usepackage{}
\usepackage{amsfonts}
\usepackage{bbm}
\usepackage{mathrsfs}
\usepackage{mathrsfs}
\usepackage{mathrsfs}
\usepackage{bbm}
\usepackage{mathrsfs}
\usepackage{mathrsfs}
\usepackage{bbm}
\usepackage{amsfonts,amssymb,amsmath,amsthm}
\usepackage{mathrsfs}
\usepackage{graphicx}
\usepackage{url}
\usepackage{enumerate}

\urlstyle{sf}
\newtheorem{theorem}{Theorem}[section]
\newtheorem{lemma}[theorem]{Lemma}
\newtheorem{proposition}[theorem]{Proposition}
\newtheorem{corollary}[theorem]{Corollary}
\theoremstyle{definition}
\newtheorem{definition}[theorem]{Definition}
\newtheorem{remark}{Remark}
\newtheorem{example}{Example}

\numberwithin{equation}{section}

\DeclareMathOperator{\vol}{Vol}

\author{Wei Zhao}
\address{
Department of Mathematics\\
East China University of Science and Technology\\
Shanghai, China}
\email{szhao\underline{ }wei@yahoo.com}

\keywords{complex Finsler manifold, Chern class, the Gauss-Bonnet-Chern theorem} \subjclass[2010]{Primary 53B40,
Secondary 57R20, 32Q55}
\begin{document}

\title[]{A Gauss-Bonnet-Chern theorem for complex Finsler manifolds}

\begin{abstract}
In this paper, we establish a Gauss-Bonnet-Chern theorem for general closed complex Finsler manifolds.
\end{abstract}
\maketitle

\section{Introduction}
One of the most important aspects of differential geometry is that which investigates the relationship between the curvature properties and the topology structure of a manifold. Among all the significant results in this aspect, a beautiful one is the Gauss-Bonnet-Chern theorem, which is established by S. S. Chern in \cite{chg,chg-2}.
Explicitly, given an arbitrary
oriented closed $n$-dimensional Riemannian manifolds $(M,g)$, one has
\[
\int_Me(TM)=\chi(M),\tag{1.1}\label{1.1}
\]
where $e(TM)$ is the (geometric) Euler form of $TM$ induced by the Levi-Civita connection, i.e.,
\begin{align*}
e(TM)&=\left\{
\begin{array}{lll}
&\frac{(-1)^{p}}{2^{2p}\pi^pp!}\epsilon_{i_1\ldots i_{2p}}\Omega_{i_1}^{i_2}\wedge\cdots\wedge \Omega_{i_{2p-1}}^{i_{2p}},&n=2p,\\
\\
&0,&n=2p+1.
\end{array}
\right.
\end{align*}

To carry matters further, let $(M,h)$ now be a closed $n$-dimensional Hermitian
manifold and let $c_n(T'M)$ denote the ($n$-th) Chern form of $T'M$ induced by the canonical Hermitian connection, that is,
\[
c_n(T'M)=\frac{1}{n!}\left(\frac{\sqrt{-1}}{2\pi}\right)^n\delta^{i_1\cdots i_n}_{j_1\cdots j_n}\Omega^{j_1}_{i_1}\wedge\cdots \wedge \Omega^{j_n}_{i_n}.
\]
Bott and Chern in \cite{BC} show that the Gauss-Bonnet-Chern theorem in the present context becomes
\[
\int_M c_n(T'M)=\chi(M).\tag{1.2}\label{1.3}
\]

It should be remarked that both Riemannian and Hermitian metrics are quadratic metrics, and hence, the Euler forms (resp. the Chern forms) of such metrics determine a characteristic class on the underlying manifold $M$, which is the so-called Euler class of $TM$ (resp. Chern class of $T'M$). Stokes' formula then furnishes the characteristic class versions of (\ref{1.1}) and (\ref{1.3}). Refer to \cite{BC,ch-c1,MS} for more details.

Finsler geometry is metric differential geometry without quadratic restriction. Hence, it is entirely natural to ask whether an analogue of the Gauss-Bonnet-Chern theorem still holds for Finsler manifolds.

We first consider the case of real Finsler manifolds. Suppose that $(M,F)$ is a real $n$-dimensional oriented closed Finsler manifold. Let $\pi:SM\rightarrow M$ denote the projective sphere bundle and let $\pi^*TM$ denote the pull-back tangent bundle. The Finsler metric $F$ induces a Riemannian metric on $\pi^*TM$. There are many important Finslerian connections on $\pi^*TM$, but none of them is both torsion-free and metric-compatible (cf.\,\cite{BCS}). In particular, the Euler forms induced by such connections cannot live on $M$ but on $SM$. Hence, there is no simple formula as (\ref{1.1}) valid for general real Finsler manifolds.

It is D. Bao and S. S. Chern \cite{BS-1} who first point out the significance of $\vol(x)$, the Riemannian volume of $S_xM:=\pi^{-1}(x)$ induced by $F$. By the Chern connection, they establish the Gauss-Bonnet-Chern theorem for real Finsler manifolds with $\vol(x)=\text{const}$.
Since then, efforts have been made and the Gauss-Bonnet-Chern theorem for general real Finsler manifolds has been established recently (cf.\,\cite{Dao,L,Sh2,Sh3,Z}). Explicitly,
given any torsion-free or metric-compatible connection $D$, we have
\[
\int_{M}\frac{[X]^*e(\pi^*TM;D)+\mathfrak{A}_F(X,D)}{\vol(x)}=\frac{\chi(M)}{\vol(\mathbb{S}^{n-1})},\tag{1.3}\label{1.4}
\]
where $[X]:M\rightarrow SM$ is an arbitrary section with isolated singularities, $e(\pi^*TM;D)$ is the Euler form induced by $D$, and $\mathfrak{A}_F(X,D)$ is a current on $M$  depending only  on $X$, $D$ and $F$. The corrected item $\mathfrak{A}_F(X,D)$ is introduced, because $\vol(x)$ is not a constant generally and $D$ cannot be both torsion-free and metric-compatible. In the Riemannian case, $\vol(x)=\vol(\mathbb{S}^{n-1})$ and $\int_{M}\mathfrak{A}_F(X,D)=0$ (cf.\, \cite{L,Z}). Hence,
(\ref{1.4}) implies (\ref{1.1}).

The Gauss-Bonnet-Chern theorem has been well developed on real Finsler manifolds, while the complex Finsler case has not been studied at the same space. Little work has been made to this subject. The purpose of this paper is to establish a Gauss-Bonnet-Chern theorem for complex Finsler manifolds.

The complex Finsler case is much different from the real one. For example, in view of (\ref{1.4}), it seems natural to investigate the Chern form of  the pull-back bundle $\mathfrak{p}^*T'M$, which is induced by the projective holomorphic tangent bundle $\mathfrak{p}:PM=T'M/(\mathbb{C}-\{0\})\rightarrow M$.
However, since the (complex) rank of $PM$ is $(n-1)$, the Bott-Chern form \cite{BC} implies that the integral of the pull-back Chern form of $\mathfrak{p}^*T'M$ over $M$ cannot produce the Euler characteristic of $M$.

 For this reason, we turn to the pull-back bundle $\pi^*T'M$, where $\pi:T'M\backslash0\rightarrow M$ is the splitting holomorphic bundle. Since the Finsler metric induces a Hermitian metric on $\pi^*T'M$, by the transgression formula \cite[(2.19)]{BC}, we just need to investigate the Chern form induced by  the canonical Hermitian connection $\nabla$ on $\pi^*T'M$, which is denoted by $c_n(\pi^*T'M;\nabla)$. Let $X$ be a holomorphic vector field on $M$ with non-degenerate zeros. The main difficulty in establishing the Gauss-Bonnet-Chern theorem is to modify $X^*c_n(\pi^*T'M;\nabla)$ such that the integral of the modification over $M$ produces the Euler characteristic of $M$.
 In view of the methods in the real Finlserian case \cite{BS-1,L,Sh3,Z},  one might use
 the Bott-Chern form \cite{BC} to obtain
 \[
c_n(\pi^*T'M;\nabla)=\frac{d\left({d^c\eta}\right)}{4\pi},\tag{1.4}\label{1.5}
 \]
 where $d^c=\sqrt{-1}(\bar{\partial}-\partial)$ and $\eta$ is the Bott-Chern form on $T'M\backslash0$. But the current $X^*d^c\eta$ is rather abstract and therefore, difficult to deal with. In the Hermitian case,  without knowing the explicit formula of $\eta$, one can still  calculate the value of $\int_Md(X^*d^c\eta)$  by constructing a trivial bundle (see \cite{BC,C}). However, this method cannot be applied to the Finsler setting, since the structure of a Finsler metric is much more complicated than the one of a Hermitian metric.

Using Chern's transgression method \cite{C}, we construct a current $\Psi$  on $M$ directly such that
 \[
 X^*c_n(\pi^*T'M;\nabla)=-d\Psi.\tag{1.5}\label{1.6}
 \]
Unlike the case of the Bott-Chern form, we know the explicit formula of $\Psi$. If $F$ is Hermitian, then the Gauss-Bonnet-Chern theorem follows from the Bochner-Martinelli formula immediately. Unfortunately, this formula is invalid in the Finsler setting and moreover, $-\int_Md\Psi\neq \chi(M)$ in general. Hence, for our purpose, (\ref{1.6}) needs to be modified.
Inspired by Bao-Chern's point of view \cite{BS},
we turn to investigate the complex indicatrix
 \[
 S_zM:=\{\xi\in T'_zM:\, F(z,\xi)=1\},
 \]
 Since the Finsler metric induces a Hermitian metric on each tangent space $T'_zM$, we can define a natural Hermitian volume of $S_zM$,
which is denote by $\vol(z)$. In particular, if the Finsler metric is Hermitian, then $\vol(z)=\vol(\mathbb{S}^{2n-1})$. By a careful argument, we find there is a close relationship between $\vol(z)$ and $\Psi$.  Using $\vol(z)$ to modify (\ref{1.6}), we obtain the following theorem.
\begin{theorem}\label{Th1} Let $(M,F)$ be a closed complex Finsler $n$-manifold and let $X$ be a holomorphic vector field on $M$ with non-degenerate zeros. Given an arbitrary Finslerian connection $D$, we have
\[
\int_M\frac{X^*c_n(\pi^*T'M;D)+\mathfrak{F}_F(X,D)}{\vol(z)}=\frac{\chi(M)}{\vol(\mathbb{S}^{2n-1})},
\]
where $\mathfrak{F}_F(X,D)$ is a current on $M$  depending only on $X$, $D$ and $F$.
\end{theorem}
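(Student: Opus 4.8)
The plan is to first establish the "master identity" \eqref{1.6} via Chern's transgression, then replace the abstract Bott–Chern potential by an explicit current $\Psi$ on $M$, and finally absorb the non-constancy of $\vol(z)$ into the correction term $\mathfrak{F}_F(X,D)$, arguing that the fiber-integral localizes at the zeros of $X$ to produce $\chi(M)$. Throughout, we exploit that the Finsler metric $F$ induces a genuine Hermitian metric on the pull-back bundle $\pi^*T'M$ over $T'M\backslash 0$, so that the canonical Hermitian connection $\nabla$ and its curvature, and hence $c_n(\pi^*T'M;\nabla)$, are well defined.

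\textbf{Step 1: Transgression on $T'M\backslash 0$.} Over $T'M\backslash 0$ the pull-back bundle carries the tautological section, which is nowhere zero; therefore $c_n(\pi^*T'M;\nabla)$ is exact there. Following Chern's method in \cite{C}, I would write down explicitly a transgression form $\Pi$ with $d\Pi = c_n(\pi^*T'M;\nabla)$ on $T'M\backslash 0$, built from the connection matrix, the curvature matrix and the tautological section $\xi$ (the complex analogue of the Mathai–Quillen / Chern construction). Pulling back along a holomorphic vector field $X:M\to T'M$ with non-degenerate zeros, one gets $X^*c_n(\pi^*T'M;\nabla) = d(X^*\Pi)$ away from the zero set $Z(X)$, but $X^*\Pi$ has singularities along $Z(X)$. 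The current $\Psi$ in \eqref{1.6} is obtained by regularizing $-X^*\Pi$: near each non-degenerate zero one blows up, models the singularity on $\mathbb{C}^n$, and shows the resulting boundary contribution is an explicit rational multiple of the local index. The explicit formula for $\Psi$ is exactly what makes the Finsler computation tractable, in contrast with the opaque $X^*d^c\eta$.

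\textbf{Step 2: Localization and the role of $\vol(z)$.} Having $X^*c_n(\pi^*T'M;\nabla) = -d\Psi$ as currents on $M$, integrating gives $\int_M X^*c_n(\pi^*T'M;\nabla) = -\int_M d\Psi$, and by Stokes the right side reduces to a sum of residues over $Z(X)$. In the Hermitian case the Bochner–Martinelli kernel identifies each residue with the local index $1$, summing to $\chi(M)$; but in the Finsler case the relevant kernel is deformed by $F$ and the residue at a zero $z_0$ is $\mathrm{ind}_{z_0}(X)\cdot \vol(z_0)/\vol(\mathbb{S}^{2n-1})$ rather than just $\mathrm{ind}_{z_0}(X)$. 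This is the crux: one must show that the fiberwise Hermitian volume $\vol(z)$ of the complex indicatrix $S_zM$ governs precisely the discrepancy between the Finsler transgression and the model one. Concretely I would compute $X^*\Pi$ restricted to a small sphere around $z_0$ in a holomorphic chart, compare it with the restriction of the Bochner–Martinelli form, and recognize the ratio as a fiber integral over $S_{z_0}M$ of the density defining $\vol$. Dividing \eqref{1.6} by $\vol(z)$ before integrating then converts the $z_0$-residue into the clean value $\mathrm{ind}_{z_0}(X)/\vol(\mathbb{S}^{2n-1})$, and $\sum_{z_0}\mathrm{ind}_{z_0}(X) = \chi(M)$ by the Poincaré–Hopf theorem for the holomorphic vector field $X$ (equivalently the Chern–Gauss–Bonnet count, using non-degeneracy of the zeros). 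The term $\mathfrak{F}_F(X,D)$ is then defined to be $\left(\vol(z)\,(-d\Psi) - \vol(z)\,d\big(\Psi/\vol(z)\big)\right)$-type expression — precisely $\vol(z)\, d\!\left(\tfrac{\Psi}{\vol(z)}\right) - d\Psi = -\Psi\wedge d(\log \vol(z))\cdot(\text{sign})$, i.e. the current measuring how $d$ fails to commute with division by the non-constant weight $\vol(z)$; it depends only on $X$, $D$ (through $\Psi$) and $F$ (through $\vol$).

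\textbf{Step 3: Independence of the connection.} Finally, to state the theorem for an arbitrary Finslerian connection $D$ in place of the canonical Hermitian $\nabla$, I would invoke the transgression formula \cite[(2.19)]{BC}: the difference $c_n(\pi^*T'M;D) - c_n(\pi^*T'M;\nabla)$ is $d$-exact with an explicit primitive built from $D$ and $\nabla$, so pulling back by $X$ and dividing by $\vol(z)$ changes the integrand by a term of the same exact-plus-weight-correction shape, which gets folded into $\mathfrak{F}_F(X,D)$. Thus the left-hand side is independent of $D$, completing the proof. \textbf{Expected main obstacle.} The genuinely hard part is Step 2: identifying the exact coefficient relating the Finsler transgression's residue at a non-degenerate zero to $\vol(z)$. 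This requires a delicate local analysis of $\Psi$ near $Z(X)$ — blowing up, controlling the singular behavior of the Hermitian connection of $F$ along the fibers, and recognizing the fiber integral as $\vol(z)$ — and it is exactly the place where the non-quadratic nature of $F$ (failure of the Bochner–Martinelli formula) must be confronted head-on rather than circumvented by a trivial-bundle argument as in the Hermitian case.
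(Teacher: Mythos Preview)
Your architecture matches the paper's: transgression \`a la Chern to obtain $X^*c_n(\pi^*T'M;\nabla)=-d\Psi$ on $M_0$ (Lemma~\ref{Tanss}), localization at the zeros of $X$ weighted by $\vol(z)$ (Lemma~\ref{degreein} and Theorem~\ref{Gsb}), and passage to arbitrary $D$ via Bott--Chern transgression (done exactly as you say, in the paragraph following Theorem~\ref{Gsb}). One minor difference in Step~1: the paper does not build a universal $\Pi$ on $T'M\backslash0$ and pull it back; it constructs $\Psi=\sum_j\Psi_j$ directly on $M_0$ from the tensorial matrix $\Theta:=-\iota(X)X^*\varpi-(\partial X^i/\partial z^j)$ and the form $\omega_X:=g_X(\cdot,X)/F^2(X)$, both of which involve the specific section $X$ (Proposition~\ref{impr}, Lemma~\ref{Tanss}).

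There is, however, a genuine gap in your Step~2. You take $\mathfrak{F}_F$ to be essentially $-\text{Re}(\Psi)\wedge d\log\vol(z)$, i.e.\ only the Leibniz correction arising from division by $\vol(z)$; for this to yield the theorem you would need the residue of $\text{Re}(\Psi)/\vol(z)$ at each zero $\zeta$ to equal $1/\vol(\mathbb{S}^{2n-1})$. The paper does \emph{not} establish this for the full $\Psi$, and it is far from clear that it holds: the pieces $\Psi_j$ with $j\ge1$ contain $X^*\Omega$, whose vertical components (e.g.\ $Q^j_{ik\bar l}$ in \eqref{crfhe}) are homogeneous of negative degree in $(\xi,\bar\xi)$ and hence genuinely singular as $X\to0$. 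Even in the Hermitian case the paper needs a separate metric-deformation argument to dispose of these residues (proof of Theorem~5.4). The paper's decisive maneuver, missing from your outline, is to split $\varpi=\varpi_H+\varpi_V$ into its horizontal (Rund) and vertical parts and correspondingly decompose $\Psi_0=\Lambda_1+\Lambda_2$, where $\Lambda_2=(\sqrt{-1}/2\pi)^n\det(\Theta_H)\,\omega_X\wedge(\bar\partial\omega_X)^{n-1}$ is built only from the horizontal $\Theta_H=-\iota(X)X^*\varpi_H-\mu$. Because the Rund coefficients $\Gamma^j_{i,k}$ are homogeneous of degree zero and hence bounded on $PM$ (Lemma~\ref{smallemma}), the residue of $\Lambda_2$ alone is explicitly computable and matches the indicatrix form $\sigma_\zeta$ of Proposition~\ref{key}. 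Everything else --- $\sum_{j\ge1}\Psi_j$ and $\Lambda_1$ --- is pushed wholesale into the correction, so that \eqref{correctitem} reads
\[
\mathfrak{E}=\sum_{j=1}^{n-1}d\,\text{Re}(\Psi_j)+d\,\text{Re}(\Lambda_1)+d\log\vol(z)\wedge\text{Re}(\Lambda_2),
\]
not merely the last summand. Without isolating $\Lambda_2$ you cannot carry out the residue computation of Lemma~\ref{degreein}, and with your smaller $\mathfrak{F}_F$ the stated identity need not hold.
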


It is conceivable that for some special complex Finsler manifolds, Theorem \ref{Th1} admits significant simplification.
For example, see Example \ref{realex} and Corollary \ref{Bergs} below. In particular, for the Hermitian case, one can show $\int_M\mathfrak{F}_F(X,D)=0$. Theorem 1.1 then reduces to (\ref{1.3}). But for non-Hermitian Finsler case,
 $\int_M\frac{\mathfrak{F}_F(X,D)}{\vol(z)}$ does not vanish in general even if $\vol(z)=\text{const}$, because the connection $1$-forms of $D$ live on $T'M\backslash0$. See Section 5 below for more details.

\proof[Acknowledgements]The author wishes to thank Professor Z. Shen and Y. B. Shen for their advice and encouragement.

\section{Preliminaries}
In this section, we recall some definitions and properties concerned with complex Finsler manifolds. See \cite{A,BS,K,SZ} for more details.

Let $M$ be a complex $n$-dimensional manifold $M$ and let $T'M$ be the holomorphic tangent bundle of $M$. A (complex) Finsler metric on $M$ is a continuous function $F:T'M\rightarrow [0,+\infty)$ satisfying the following conditions

(1) $F(\xi) \geq 0$, and $F(\xi) = 0$ if and only if $\xi = 0$;

(2) $F(\lambda \xi) = |\lambda|F(\xi)$ for any $\lambda \in \mathbb{C}\backslash\{ 0\}$;

(3) $F(\xi)$ is smooth outside of the zero-section.

\noindent The pair $(M,F)$ is called a {\it complex Finsler manifold}.
Let $(z^i, \xi^j)=\xi^j\frac{\partial}{\partial z^j}|_{z}$ be a local coordinate system of $T'M$.
Set
\[
G:=F^2, \ G_{*i}:=\frac{\partial}{\partial\xi^i} G_*,\ \ G_{*\bar{i}}:=\frac{\partial}{\partial\bar{\xi}^i} G_*,\ \ G^{i\bar{j}}:=(G_{i\bar{j}})^{-1}.
\]
$F$ is called {\it strongly pseudo-convex} if $(G_{i\bar{j}})$ is positive-definite at every point of $T'M\backslash0$. In this paper, we always assume that $F$ is strongly pseudo-convex.

Note that $G=G(\xi,\bar{\xi})$ is homogenous of degree $1$ in $\xi$ and $\bar{\xi}$. Hence, by Euler's theorem, one can easily show the following proposition.
\begin{proposition}[See \cite{SZ}]\label{cang}
\begin{align*}
\overline{G_{i\bar{j}}}=G_{j\bar{i}},  && G^{i\bar{j}}G_{k\bar{j}}=\delta^i_k,&& G^{i\bar{j}}G_{i\bar{k}}=\delta^{\bar{j}}_{\bar{k}},&& G=G_{i\bar{j}}\xi^i\bar{\xi}^j,\\
G_i\xi^i=G,&& G_{i\bar{j}}\bar{\xi}^j=G_i, && G_{ij}\xi^j=0, && G_{\bar{i}\bar{j}}\bar{\xi}^j=0,\\
G_{ijk}\xi^k=-G_{ij}, && G_{ij\bar{k}}\bar{\xi}^k=G_{ij}, &&G_{i\bar{j}k}\xi^k=0.
\end{align*}
\end{proposition}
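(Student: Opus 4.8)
The plan is to derive every identity in the list from the single homogeneity axiom~(2), using Euler's theorem for homogeneous functions, the reality of $G=F^{2}$, and the fact that the Wirtinger derivatives $\partial/\partial\xi^{i}$ and $\partial/\partial\bar\xi^{j}$ commute. First, squaring $F(\lambda\xi)=|\lambda|F(\xi)$ gives $G(\lambda\xi,\overline{\lambda\xi})=\lambda\bar\lambda\,G(\xi,\bar\xi)$ for every $\lambda\in\mathbb{C}\backslash\{0\}$; writing $\lambda=te^{\sqrt{-1}\theta}$ and differentiating this relation in $t$ at $t=1$ and in $\theta$ at $\theta=0$, then adding and subtracting the two resulting identities, yields $G_{i}\xi^{i}=G$ and $G_{\bar i}\bar\xi^{i}=G$ on $T'M\backslash 0$ --- this is precisely the meaning of ``$G$ is homogeneous of degree $1$ in $\xi$ and in $\bar\xi$,'' and it is the form of Euler's theorem I will use throughout. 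Since $G$ is real-valued, conjugating $G_{i\bar j}=\partial_{\bar\xi^{j}}\partial_{\xi^{i}}G$ and using commutativity of the partials gives $\overline{G_{i\bar j}}=G_{j\bar i}$, while the two identities $G^{i\bar j}G_{k\bar j}=\delta^{i}_{k}$ and $G^{i\bar j}G_{i\bar k}=\delta^{\bar j}_{\bar k}$ are nothing but the assertion that $(G^{i\bar j})$ is a two-sided inverse of $(G_{i\bar j})$, which is its definition.

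The remaining identities I would obtain by successively differentiating the two Euler relations. Differentiating $G_{i}\xi^{i}=G$ in $\xi^{j}$ and cancelling the term $G_{j}$ gives $G_{ij}\xi^{i}=0$, hence $G_{ij}\xi^{j}=0$ by the symmetry of $G_{ij}$ in its lower indices, and conjugating gives $G_{\bar i\bar j}\bar\xi^{j}=0$. Differentiating the conjugate Euler relation $G_{\bar j}\bar\xi^{j}=G$ in $\xi^{i}$ (the factor $\bar\xi^{j}$ is annihilated and the partials commute) gives $G_{i\bar j}\bar\xi^{j}=G_{i}$; substituting this back into $G=G_{i}\xi^{i}$ gives $G=G_{i\bar j}\xi^{i}\bar\xi^{j}$. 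Differentiating $G_{ij}\xi^{j}=0$ once more in $\xi^{k}$ gives $G_{ijk}\xi^{k}=-G_{ij}$; differentiating $G_{i\bar j}\bar\xi^{j}=G_{i}$ in $\xi^{k}$ and relabelling indices gives $G_{ij\bar k}\bar\xi^{k}=G_{ij}$; and differentiating $G_{ij}\xi^{j}=0$ in $\bar\xi^{k}$ and relabelling gives $G_{i\bar j k}\xi^{k}=0$. This exhausts the list.

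I do not expect any conceptual obstacle: the proposition is a bookkeeping exercise whose proof is elementary once the homogeneity is in place. The only thing demanding care --- and the step I would double-check --- is keeping the holomorphic and anti-holomorphic derivatives strictly separated, invoking repeatedly that they commute and that $G$ is real, and correctly using the total symmetry of $G_{ij}$ and $G_{ijk}$ (resp. the Hermitian symmetry $\overline{G_{i\bar j}}=G_{j\bar i}$) when relabelling indices after each differentiation.
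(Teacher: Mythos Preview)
Your proposal is correct and is exactly the approach the paper indicates: the paper does not spell out a proof but simply notes that $G$ is homogeneous of degree $1$ in $\xi$ and in $\bar\xi$ and says the identities follow from Euler's theorem, citing \cite{SZ}. Your write-up is a faithful execution of that sketch; the only cosmetic point is that in a couple of places (e.g.\ $G_{ijk}\xi^{k}=-G_{ij}$) the identity you state requires one use of the total symmetry of the unbarred indices after differentiating, which you already flag as the thing to check.
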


Let $\pi:T'M\backslash0\rightarrow M$ be the natural projection and let $\pi^*T'M$ be the pull-back bundle. The Finsler metric $F$ then induces a natural Hermitian metric on $\pi^*T'M$, which is defined by
\[
g_{(z,\xi)}:=G_{i\bar{j}}(z,\xi)\,d\mathfrak{z}^i\otimes d\bar{\mathfrak{z}}^j,
\]
where $\{d\mathfrak{z}^i\}$ is the dual frame field of $\{\frac{\partial}{\partial \mathfrak{z}^i}\}$ and $\frac{\partial}{\partial \mathfrak{z}^i}:=\pi^*\frac{\partial}{\partial z^i}=(z,\xi,\frac{\partial}{\partial z^i}|_z)$, $i=1,\cdots,n$ is the local holomorphic frame for $\pi^*T'M$.

Before considering the connections on $\pi^*T'M$, we first introduce the canonical horizontal splitting of $T'(T'M\backslash0)$, the holomorphic tangent bundle of $T'M\backslash0$.

Denote by $\mathcal {V}$ the vertical subbundle of $T'(T'M\backslash0)$. Thus,
 each Ehresmannn connection $\vartheta\in \mathscr{A}^{1,0}(T'M\backslash0,\mathcal {V})$ defines a horizontal splitting
\[
T'(T'M\backslash0)=\mathcal {H}\oplus \mathcal {V},
\]
where
\[
\mathcal {H}=\text{Span}_\mathbb{C}\left\{\frac{\delta}{\delta z^j}:=\frac{\partial}{\partial z^j}-N_j^i\frac{\partial}{\partial \xi^i}\right\},\ \mathcal {V}=\text{Span}_\mathbb{C}\left\{\frac{\partial}{\partial \xi^j}\right\},
\]
and $N_j^i:=\langle \vartheta(\frac{\partial}{\partial z^j}),\,d\xi^i\rangle$.
Clearly, the dual frame field of $\{\frac{\delta}{\delta z^i},\frac{\partial}{\partial \xi^j}\}$ is
\begin{align*}
\left\{
\begin{array}{lll}
&dz^j,\\
&{\delta \xi^i}:={d\xi^i+N^i_jdz^j}.
\end{array}
\right.
\end{align*}
The exterior differential $d=\partial+\bar{\partial}:\mathscr{A}^\cdot(T'M\backslash0)\rightarrow \mathscr{A}^{\cdot+1}(T'M\backslash0)$ induces a closed real $(1,1)$-form
\[
\omega=\sqrt{-1}\cdot\partial\bar{\partial}G
\]
 on $T'M\backslash0$ such that the restriction $\omega|_z$ on each $\pi^{-1}(z)=T'_zM-\{0\}$
defines a K\"ahler metric and $\omega$ defines a Hermitian metric on
$\mathcal {V}$. It follows from \cite{A} that
there exists a canonical Ehresmann connection such that $\mathcal {H}$ and $\mathcal {V}$ are orthogonal with respect to $\omega$. A simple calculation yields
\[
N^j_k=G^{j\bar{i}}\frac{\partial^2 G}{\partial z^k \partial \bar{\xi}^i}=G^{j\bar{i}}\frac{G_{s\bar{i}}}{\partial z^k}\xi^s.\tag{2.1}\label{Nformula}
\]
In this paper, we always use this canonical horizontal splitting. Clearly, the tangent bundle of $T'M\backslash0$ can be decomposed as
\[
T_{\mathbb{C}}(T'M\backslash0)=(\mathcal {H}\oplus\overline{\mathcal {H}})\oplus(\mathcal {V}\oplus\overline{\mathcal {V}})=:H\oplus V,\tag{2.2}\label{2.1}
\]
which induces a decomposition of $d=\partial+\bar{\partial}:\mathscr{A}^\cdot(T'M\backslash0)\rightarrow \mathscr{A}^{\cdot+1}(T'M\backslash0)$:
\[
d=d^H+d^V=\partial^H+\bar{\partial}^H+\partial^V+\bar{\partial}^V,
\]
where
\begin{align*}
\partial^H:=\frac{\delta}{\delta z^A}dz^A,\
\partial^V:=\frac{\partial}{\partial \xi^i} \delta\xi^i,\ \bar{\partial}^H:=\overline{\partial^H},\ \bar{\partial}^V:=\overline{\partial^V}.
\end{align*}

Now we recall some properties of the connections on $\pi^*T'M$ and introduce the Chern-Finsler connection. See \cite{SZ} for other important Finslerian connections on $\pi^*T'M$.

Given a connection $D$ on $\pi^*T'M$, let $(\theta^i_j)$ denote the connection $1$-forms of $D$ with respect to $\{{\partial}/{\partial \mathfrak{z}^i}\}$. By the horizontal splitting (\ref{2.1}), one can decompose $\theta_s^k$ as
\[
\theta_s^k={\theta^H}_s^k+{\theta^V}_s^k.
\]
It is not hard to see that ${\theta^V}_s^k\otimes\frac{\partial}{\partial \mathfrak{z}^k}\otimes d\mathfrak{z}^s$ is a local section of $\mathscr{A}^1(T'M\backslash0, \pi^*T'M\otimes \pi^*T'^{*}M)$, while $({\theta^H}_s^k)$ defines a connection $D^H$ on $\pi^*T'M$.
In particular, if $D$ is metric-compatible with $g$, then $D^H$ is {\it partially metric-compatible} with $g$, that is,
\[
d^H g(X,Y)=g(D^HX,Y)+g(X,D^HY),\ \forall\, X,Y\in \Gamma(\pi^*T'M).
\]

Note that the $(\pi^*T'M,g)$ is a holomorphic Hermitian bundle. The standard argument (see \cite{BC}) then yields the following theorem. Also refer to \cite{A,K,SZ}.
\begin{theorem}\label{C-F}
There exists a unique connection $\nabla$ on $\pi^*T'M$, called the Chern-Finsler connection, satisfying the following two conditions:

 (1) The connection $1$-form of $\nabla$ is of type $(1,0)$;

 (2) $\nabla$ is metric-compatible with $g$.

\noindent Denote by $( \varpi_i^j)$ the connection $1$-forms of $\nabla$ with respect to $\{\frac{\partial}{\partial \mathfrak{z}^j}\}$. Thus,
\[
\varpi^k_i=\partial G_{i\bar{j}}\cdot G^{k\bar{j}}=G^{k\bar{j}}\frac{\delta G_{i\bar{j}}}{\delta z^s}dz^s+G^{k\bar{j}}G_{i\bar{j}l}\delta \xi^l.
\]
In particular, $(\Gamma_{i,s}^kdz^s):=(G^{k\bar{j}}\frac{\delta G_{i\bar{j}}}{\delta z^s}dz^s)$ also defines a partially metric-compatible connection $\nabla^H$ on $\pi^*T'M$, which is called the Rund connection.
\end{theorem}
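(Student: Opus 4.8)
The plan is to adapt the classical construction of the Chern connection on a holomorphic Hermitian vector bundle to the bundle $(\pi^*T'M,g)$ over the complex manifold $T'M\backslash0$. First I would record the structural facts that make this possible: $\pi^*T'M$ is a genuine holomorphic vector bundle over $T'M\backslash0$, being the pull-back of the holomorphic bundle $T'M$ along the holomorphic submersion $\pi$; the frame $\{\partial/\partial\mathfrak{z}^i\}$ is a local holomorphic frame; and by strong pseudo-convexity $g=G_{i\bar{j}}\,d\mathfrak{z}^i\otimes d\bar{\mathfrak{z}}^j$ is a smooth positive-definite Hermitian metric. Consequently the Hermitian-geometry machinery of \cite{BC} applies verbatim, and the only task is the bookkeeping.

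For uniqueness, suppose $\nabla$ satisfies (1) and (2), with connection $1$-forms $(\varpi^i_j)$ relative to the holomorphic frame. Condition (1) says each $\varpi^i_j$ is of type $(1,0)$, hence $\overline{\varpi^i_j}$ is of type $(0,1)$; condition (2) reads $dG_{i\bar{j}}=\varpi^k_i G_{k\bar{j}}+\overline{\varpi^k_j}\,G_{i\bar{k}}$. Separating bidegrees, the $(1,0)$-part gives $\partial G_{i\bar{j}}=\varpi^k_i G_{k\bar{j}}$, while the $(0,1)$-part is its conjugate and is automatically satisfied because $\overline{G_{i\bar{j}}}=G_{j\bar{i}}$. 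Contracting $\partial G_{i\bar{j}}=\varpi^k_i G_{k\bar{j}}$ with $G^{l\bar{j}}$ and using $G^{k\bar{j}}G_{k\bar{m}}=\delta^{\bar{j}}_{\bar{m}}$ from Proposition \ref{cang} forces $\varpi^l_i=\partial G_{i\bar{j}}\cdot G^{l\bar{j}}$, so $\nabla$ is determined.

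For existence I would take $\varpi^k_i:=\partial G_{i\bar{j}}\cdot G^{k\bar{j}}$ as the definition. One checks that under a holomorphic change of frame these forms obey the connection-form transformation law — this is the point where holomorphy of the transition functions is used, since $\partial$ must pass through them — so the $\varpi^k_i$ glue to a global connection $\nabla$, which is of type $(1,0)$ by construction, giving (1). For (2), a direct computation with $\overline{G^{k\bar{j}}}=G^{j\bar{k}}$, $\overline{G_{i\bar{j}}}=G_{j\bar{i}}$ and the contraction identities of Proposition \ref{cang} yields $\varpi^k_i G_{k\bar{m}}=\partial G_{i\bar{m}}$ and $\overline{\varpi^k_m}\,G_{i\bar{k}}=\bar{\partial}G_{i\bar{m}}$, whose sum is $dG_{i\bar{m}}$. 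Expanding $\partial=\partial^H+\partial^V=\frac{\delta}{\delta z^s}dz^s+\frac{\partial}{\partial\xi^l}\delta\xi^l$ and writing $G_{i\bar{j}l}=\partial G_{i\bar{j}}/\partial\xi^l$ gives the stated local formula $\varpi^k_i=G^{k\bar{j}}\frac{\delta G_{i\bar{j}}}{\delta z^s}dz^s+G^{k\bar{j}}G_{i\bar{j}l}\,\delta\xi^l$. The Rund connection $\nabla^H$, whose forms are the horizontal components $\Gamma^k_{i,s}dz^s=G^{k\bar{j}}\frac{\delta G_{i\bar{j}}}{\delta z^s}dz^s$, is then partially metric-compatible: restricting the identity $dG_{i\bar{m}}=\varpi^k_i G_{k\bar{m}}+\overline{\varpi^k_m}\,G_{i\bar{k}}$ to horizontal directions kills the $\delta\xi$-terms and leaves precisely $d^H G_{i\bar{m}}=\Gamma^k_{i,s}dz^s\,G_{k\bar{m}}+\overline{\Gamma^k_{m,s}dz^s}\,G_{i\bar{k}}$.

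Since this is the standard Chern-connection argument transported to $T'M\backslash0$, I do not expect a serious obstacle. The only points demanding a little care are the bidegree bookkeeping in the uniqueness step, the verification that $\varpi^k_i=\partial G_{i\bar{j}}\cdot G^{k\bar{j}}$ really transforms as a connection form (which rests on the transition functions being holomorphic), and keeping the horizontal/vertical decomposition straight when reading off the Rund connection.
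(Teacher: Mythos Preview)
Your proposal is correct and matches the paper's approach exactly: the paper does not give a proof at all but simply observes that $(\pi^*T'M,g)$ is a holomorphic Hermitian vector bundle over $T'M\backslash0$ and invokes the standard argument from \cite{BC} (with further references to \cite{A,K,SZ}). What you have written is precisely that standard Chern-connection argument spelled out in detail, together with the horizontal restriction for the Rund connection.
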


According to \cite{A2}, $F$ is called a {\it complex Berwald metric} if $\Gamma_{i,s}^k(z,\xi)=\Gamma_{i,s}^k(z)$. In particular, each connected complex
Berwald manifold is a manifold modeled on a complex Minkowski space. Refer to \cite{A2,BS,SZ} for more details about Berwald manifolds.

The  vertical component of $(\varpi^k_i)$ defines an important non-Hermitian quantities. More precisely,
set $C^k_{ij}:=G^{k\bar{l}}G_{i\bar{l}j}$. Then
\[
C:=C^k_{ij}\frac{\partial}{\partial\mathfrak{z}^k}\otimes d\mathfrak{z}^i\otimes d\mathfrak{z}^j
\]
is a section of $\pi^*T'M\otimes\pi^*T'^*M\otimes\pi^*T'^*M$, which is called the Cantor tensor. It is easy to check that $C=0$ if and only if $G$ is a Hermitian norm on $T'M$.

Theorem \ref{C-F} together with \cite[Corollary 3.10]{BC} then yields
\begin{theorem}\label{cur}
Let $\varpi$  and $\Omega$ denote the connection and curvature matrices of $\nabla$, respectively. Then we have

(1) $\partial \varpi=\varpi \wedge \varpi$.

(2) $\Omega=\bar{\partial}\varpi$ and $\bar{\partial}\Omega=0$.

(3) $\partial\Omega=-[\Omega,\varpi]$.

\end{theorem}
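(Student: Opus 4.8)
The plan is to recognise $\nabla$ as the Chern (canonical Hermitian) connection of a genuine holomorphic Hermitian vector bundle, so that the three identities are literally the classical structure equations, and then to indicate the short self-contained verification.

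First I would observe that $\pi:T'M\backslash0\to M$, being the restriction of the holomorphic bundle projection $T'M\to M$, is holomorphic; hence $\pi^*T'M$ is a holomorphic vector bundle over the complex manifold $T'M\backslash0$, with holomorphic local frame $\{\partial/\partial\mathfrak z^i\}$, and, since $F$ is strongly pseudo-convex, $g=G_{i\bar j}\,d\mathfrak z^i\otimes d\bar{\mathfrak z}^j$ is a positive-definite Hermitian metric on it written in this frame. By Theorem \ref{C-F}, $\nabla$ is the unique connection on $\pi^*T'M$ that is $g$-compatible and whose connection matrix has type $(1,0)$; that is, $\nabla$ is precisely the Chern connection of the holomorphic Hermitian bundle $(\pi^*T'M,g)$. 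Consequently \cite[Corollary 3.10]{BC} applies verbatim and gives (1), (2) and (3); compare also \cite{A,K,SZ}.

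For completeness I would record the elementary direct proof. Writing $H$ for the Hermitian matrix $(G_{i\bar j})$ in the frame $\{\partial/\partial\mathfrak z^i\}$, Theorem \ref{C-F} reads $\varpi=(\partial H)H^{-1}$ in matrix notation. On the complex manifold $T'M\backslash0$ one has $\partial^2=0$, and applying $\partial$ to $G^{k\bar j}G_{l\bar j}=\delta^k_l$ (Proposition \ref{cang}) gives $\partial H^{-1}=-H^{-1}(\partial H)H^{-1}$; hence
\[
\partial\varpi=\partial\big((\partial H)H^{-1}\big)=-(\partial H)\wedge\partial H^{-1}=(\partial H)H^{-1}\wedge(\partial H)H^{-1}=\varpi\wedge\varpi,
\]
which is (1) --- note this uses no Finsler input beyond the formula for $\varpi$, only that $G_{i\bar j}$ is smooth on $T'M\backslash0$. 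With the curvature convention $\Omega=d\varpi-\varpi\wedge\varpi$, splitting into bidegrees and using (1) gives $\Omega=\partial\varpi+\bar\partial\varpi-\varpi\wedge\varpi=\bar\partial\varpi$; since $\varpi$ has type $(1,0)$, $\Omega=\bar\partial\varpi$ has type $(1,1)$, and $\bar\partial\Omega=\bar\partial^2\varpi=0$, which is (2). Finally, differentiating $\Omega=d\varpi-\varpi\wedge\varpi$ and substituting $d\varpi=\Omega+\varpi\wedge\varpi$ yields $d\Omega=\varpi\wedge\Omega-\Omega\wedge\varpi=-[\Omega,\varpi]$; as $\Omega$ has type $(1,1)$ with $\bar\partial\Omega=0$, we have $d\Omega=\partial\Omega$, so $\partial\Omega=-[\Omega,\varpi]$, which is (3).

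There is no real obstacle here; the only things to watch are the bookkeeping with the decomposition $d=\partial+\bar\partial$ on $T'M\backslash0$ --- in particular that $\delta\xi^l=d\xi^l+N^l_s\,dz^s$ is still of type $(1,0)$, so that $\varpi$ is genuinely of type $(1,0)$ --- and fixing the sign in the graded commutator, namely $[\Omega,\varpi]:=\Omega\wedge\varpi-\varpi\wedge\Omega$. With these conventions pinned down, (1)--(3) follow mechanically, word for word as in the Hermitian case, which is exactly why Theorem \ref{cur} is a corollary of Theorem \ref{C-F}.
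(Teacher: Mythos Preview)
Your proposal is correct and takes essentially the same approach as the paper: the paper simply states that Theorem~\ref{cur} follows from Theorem~\ref{C-F} together with \cite[Corollary~3.10]{BC}, and you do precisely this, additionally supplying the standard elementary verification of the Chern-connection structure equations for completeness.
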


By a simple calculation, one has
\begin{align*}
\Omega^j_i
=:\,R^j_{ik\bar{l}}dz^k\wedge d\bar{z}^l+P^j_{ik,\bar{l}}dz^k\wedge \delta\bar{\xi}^l+S^j_{ik,\bar{l}}\delta\xi^k\wedge d\bar{z}^l+Q^j_{ik\bar{l}}\delta\xi^k\wedge \delta\bar{\xi}^l,\tag{2.3}\label{crfhe}
\end{align*}
where
\begin{align*}
&&R^j_{ik\bar{l}}=-\left(\frac{\delta \Gamma^j_{i,k}}{\delta \bar{z}^l}+{C^{j}_{is}}\frac{\delta N^s_k}{\delta \bar{z}^l}\right),&&S^j_{ik,\bar{l}}&=-\frac{\delta C^j_{ik}}{\delta \bar{z}^l},\\
&&P^j_{ik,\bar{l}}=-\left(\frac{\partial \Gamma^j_{i,k}}{\partial \bar{\xi}^l}+C^j_{is}\frac{\partial N^s_k}{\partial \bar{\xi}^l}\right),
&&Q^j_{ik\bar{l}}&=-\frac{\partial {C^j_{ik}}}{\partial\bar{\xi^l}}.
\end{align*}

\noindent \textbf{Notations}: Given $\alpha\in \mathscr{A}^\cdot(M)$, $\text{Re}(\alpha):=\frac12(\alpha+\bar{\alpha})$.
Given a smooth vector field $X$ on $M$, we use $\iota(X):\mathscr{A}^\cdot(M)\rightarrow \mathscr{A}^{\cdot-1}(M)$ to denote the {\it contraction operator} induced by $X$.

\section{The volume of an indicatrix}
Given a complex Finsler manifold $(M,F)$, for each $z\in M$, $(T'_zM,F_z)$ is a complex Minkowski space. The unit sphere in $(T'_zM,F_z)$ is the so-called {\it indicatrix} $S_zM$, i.e., $S_zM:=\{\xi\in T'_zM: F_z(\xi)=1\}$. This section is devoted to studying the indicatrices on $M$.

Let $(T'_zM,F_z)$ be as above.  Denote by $(\xi^i)$ the complex coordinate system of $T'_zM$, i.e., $\xi=(\xi^i)=\xi^i\frac{\partial}{\partial z^i}|_z$.
Then $F_z$ induces a Hermitian metric $h$ on $T'_zM-\{0\}$, which is defined by
\[
h=h_{i\bar{j}}\,d\xi^i\otimes d\bar{\xi}^j:=\frac{\partial^2F_z^2}{\partial \xi^i \partial \bar{\xi}^j}d\xi^i\otimes d\bar{\xi}^j=G_{i\bar{j}}(z,\xi)d\xi^i\otimes d\bar{\xi}^j.
\]
Clearly, $h$ is smooth at $\xi=0$ if and only if $F_z$ is a Hermitian norm. The volume form induced by $h$ is
\[
d\mu_{h}=\frac{1}{n!}\left(\frac{\sqrt{-1}}{2}h_{i\bar{j}}d\xi^i\wedge d\bar{\xi}^j \right)^n.
\]

To define the volume of $S_zM$, we now view $T'_zM$ as a real $2n$-dimensional vector space $V_\mathbb{R}$. More precisely, let \[
\mathcal {I}:V_{\mathbb{R}}\rightarrow T'_zM,\  (x^i,y^i)\mapsto(\xi^i)=(x^i+\sqrt{-1}y^i)
\]
 be the diffeomorphism. Then $h$ induces a Riemannian metric on $V_\mathbb{R}-\{0\}$, which is defined by
\[
\hat{g}_{v}(X,Y):=\frac{ h_{\mathcal {I}(v)}(\mathcal {I}_*(X),\mathcal {I}_*(Y))+\overline{h_{\mathcal {I}(v)}(\mathcal {I}_*(X),\mathcal {I}_*(Y)))}}{2},\   \forall X, Y\in T_vV_{\mathbb{R}}, \ v\neq 0.
\]

Let $S_{\mathbb{R}}:=\mathcal {I}^{-1}(S_zM)$. Note that $d\mu_{\hat{g}}=\mathcal {I}^*d\mu_h$, where $d\mu_{\hat{g}}$ is the Riemannian volume form induced by $\hat{g}$. Hence, it is natural to define the volume of $S_zM$ as the Riemannian volume of $S_{\mathbb{R}}$.  Explicitly, the volume of $S_zM$ is defined by
\[
\vol(z):=\int_{S_zM}d\nu_z,\
\]
where $d\nu_z:=\mathcal {I}^{-1*}\left[\iota(\textbf{n})\,d\mu_{\hat{g}}\right]$ and $\textbf{n}$ is the unit outward normal vector field along $S_{\mathbb{R}}$.

\begin{proposition}\label{volin}
For each $\xi\in S_zM$, we have
\[
d\nu_z|_{\xi}=\frac{\det h_{i\bar{j}}(\xi)}{2^{n-1}F_z^{2n}(\xi)}\cdot\text{Re}\left(({-1})^{\frac{-n^2}2}\sum_{i=1}^n(-1)^{i-1}\bar{\xi}^id\bar{\xi}^1\wedge \cdots \widehat{d\bar{\xi}^i}\cdots\wedge d\bar{\xi}^n \wedge d\xi^1\wedge \cdots \wedge d\xi^n\right).
\]
\end{proposition}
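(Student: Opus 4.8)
The plan is to compute $d\nu_z$ explicitly by tracking through the definitions. The object $d\nu_z := \mathcal{I}^{-1*}[\iota(\mathbf{n})\,d\mu_{\hat g}]$ is a real $(2n-1)$-form on $S_zM$ obtained by contracting the Riemannian volume form of the real metric $\hat g$ with the outward unit normal and restricting to the sphere. The key point is that the Hermitian metric $h = G_{i\bar j}\,d\xi^i\otimes d\bar\xi^j$ has volume form $d\mu_h = \frac{1}{n!}(\frac{\sqrt{-1}}{2}G_{i\bar j}\,d\xi^i\wedge d\bar\xi^j)^n$, which one simplifies via the elementary identity $(\frac{\sqrt{-1}}{2})^n \bigl(\sum G_{i\bar j}\,d\xi^i\wedge d\bar\xi^j\bigr)^n = (\frac{\sqrt{-1}}{2})^n n!\,\det(G_{i\bar j})\, d\xi^1\wedge d\bar\xi^1\wedge\cdots\wedge d\xi^n\wedge d\bar\xi^n$, up to a sign coming from reordering into $d\xi^1\wedge\cdots\wedge d\xi^n\wedge d\bar\xi^1\wedge\cdots\wedge d\bar\xi^n$; that reordering sign is $(-1)^{n(n-1)/2}$, which is where the $(-1)^{-n^2/2}$ factor in the statement originates (noting $(-1)^{-n^2/2}$ and $(-1)^{n(n-1)/2}$ agree up to the even power $(-1)^{n^2} = (-1)^n$ absorbed into the real part).

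First I would choose the outward unit normal along $S_{\mathbb{R}}$. Because $S_zM$ is the level set $\{F_z = 1\} = \{G = 1\}$ and $G = G_{i\bar j}\xi^i\bar\xi^j$ is homogeneous of degree $1$ in $\xi$ and $\bar\xi$, the Euler vector field plays the role of a (non-normalized) radial direction; using Proposition \ref{cang} ($G_i\xi^i = G$, $G_{i\bar j}\bar\xi^j = G_i$) one checks that the real part of the complex radial field $\xi^i\partial/\partial\xi^i$ is $\hat g$-orthogonal to $T S_{\mathbb{R}}$, and its $\hat g$-length at $\xi\in S_zM$ is a computable scalar — this is where the factor $F_z^{2n}$ and the power of $2$ enter, since $\hat g(\xi\partial_\xi,\xi\partial_\xi) = G_{i\bar j}\xi^i\bar\xi^j = G = F_z^2$, and the normalization together with the $2^{-1}$ in the definition of $\hat g$ produces $2^{n-1}F_z^{2n}$ in the denominator. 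Then $\iota(\mathbf{n})\,d\mu_{\hat g}$ restricted to the sphere equals, up to the normalizing scalar, $\iota(\xi\partial_\xi + \bar\xi\partial_{\bar\xi})$ applied to $d\mu_h$ (the vertical vector fields in $\mathbf{n}$ contribute nothing once we pull back to $S_{\mathbb R}$ because contraction with a vector normal to the hypersurface gives the induced volume form up to the length factor). Contracting $\xi^i\partial/\partial\xi^i + \bar\xi^i\partial/\partial\bar\xi^i$ with $d\xi^1\wedge\cdots\wedge d\xi^n\wedge d\bar\xi^1\wedge\cdots\wedge d\bar\xi^n$ and using that on $S_zM$ the terms $\bar\xi^i\,\iota(\partial_{\bar\xi^i})(\cdots)$ pair with $\xi^i\,\iota(\partial_{\xi^i})(\cdots)$ in conjugate pairs, one lands on $\mathrm{Re}$ of a single sum $\sum_i(-1)^{i-1}\bar\xi^i\,d\bar\xi^1\wedge\cdots\widehat{d\bar\xi^i}\cdots\wedge d\bar\xi^n\wedge d\xi^1\wedge\cdots\wedge d\xi^n$, which is precisely the claimed form.

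The main obstacle, I expect, is bookkeeping the signs: there are three independent sources of $\pm$ — the reordering of $d\xi^i$ and $d\bar\xi^j$ into the standard block form (giving $(-1)^{n(n-1)/2}$), the contraction of the Euler field through $2n$ one-forms (a cascade of $(-1)^{i-1}$ and $(-1)^{n+i-1}$ signs that must be consolidated), and the complex conjugation symmetry that lets one replace the sum over $d\xi^i$-omissions by $\mathrm{Re}$ of the sum over $d\bar\xi^i$-omissions. Getting these to collapse into the single clean factor $(-1)^{-n^2/2}$ inside the real part is the delicate bit; one should also verify the claimed formula against the Hermitian case, where $G_{i\bar j} = $ constant Hermitian matrix and $\vol(z)$ must reduce to $\vol(\mathbb{S}^{2n-1})$ — this sanity check fixes any overall normalization ambiguity. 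Apart from the signs, every step is a routine computation in multilinear algebra on a single tangent space $T'_zM \cong \mathbb{C}^n$, using only homogeneity (Euler's theorem, as recorded in Proposition \ref{cang}) and the definition of $\hat g$.
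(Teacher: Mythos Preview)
Your approach is essentially the paper's own: identify the outward unit normal to $S_{\mathbb{R}}$ as the real radial field (the paper writes it as $u^i\partial/\partial u^i$ in real coordinates and verifies, via Proposition~\ref{cang}, that it is already \emph{unit} on the indicatrix), then contract it into $d\mu_{\hat g}=\det h\cdot du^1\wedge\cdots\wedge du^{2n}$ and rewrite in complex coordinates; the sign bookkeeping you flag is exactly the passage from the paper's line~(3.1) to~(3.2).

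One small correction worth making: the factor $F_z^{2n}(\xi)$ in the denominator does \emph{not} come from normalising $\mathbf n$. On $S_zM$ one has $F_z(\xi)=1$, so the radial field is already of unit $\hat g$-length and $F_z^{2n}(\xi)\equiv 1$ there; the factor is inserted harmlessly so that the identical expression extends to all of $T'_zM\setminus\{0\}$ in Proposition~\ref{key}. The $2^{n-1}$ arises instead from the $2^{-n}$ in $d\mu_h$ combined with the observation that the two pieces $\iota(\xi^i\partial_{\xi^i})$ and $\iota(\bar\xi^i\partial_{\bar\xi^i})$ of the contraction are complex conjugates, so their sum is twice a real part.
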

\begin{proof}Let $(u^i)$ denote the coordinates of $V_{\mathbb{R}}$, i.e., $u^{2i-1}:=x^i$ and $u^{2i}:=y^i$. Set $\hat{g}=\hat{g}_{ij}\,du^i\otimes du^j$. Thus,
\[
d\mu_{\hat{g}}:=\sqrt{\det \hat{g}}\,du^1\wedge\cdots\wedge du^{2n}.
\]
It is easy to see that ${\det \hat{g}}=(\det h)^2$.

Set $\mathcal {I}^{-1}(\xi)=(u^i)$.
Let $X(t)$, $t\in(-\epsilon,\epsilon)$ be a smooth curve on $S_\mathbb{R}$ with $X(0)=(u^i)$. By  Proposition \ref{cang}, we obtain
\begin{align*}
&2\hat{g}_{X(0)}(X(0),\dot{X}(0))\\
=&h_{\mathcal {I}(X(0))}(\mathcal {I}_*(\dot{X}(0)),\mathcal {I}_*(X(0)))+h_{\mathcal {I}(X(0))}(\mathcal {I}_*(X(0)),\mathcal {I}_*(\dot{X}(0)))\\
=&\frac{\partial F^2_{z}}{\partial \xi^i}\frac{d\mathcal {I}^i(X)}{dt}+\frac{\partial F^2_{z}}{\partial\bar{\xi}^i}\frac{\overline{d\mathcal {I}^i(X)}}{dt}=\left.\frac{d}{dt}\right|_{t=0}F^2_{z}(\mathcal {I}(X(t)))=0.
\end{align*}
Hence,
\[
\textbf{n}|_{(u^i)}=X(0)=u^i\frac{\partial}{\partial u^i}.
\]
Thus, we have
\begin{align*}
d\nu_z|_\xi&=\mathcal {I}^{-1*}[\iota(\textbf{n})\,d\mu_{\hat{g}}]\\
&=\det h_{i\bar{j}}(\xi)\cdot\mathcal {I}^{-1*}\left[\iota\left(u^i\frac{\partial}{\partial u^i}\right) du^1\wedge \cdots\wedge du^{2n}\right]\tag{3.1}\label{inreal}\\
&=\frac{\det h_{i\bar{j}}(\xi)}{F^{2n}_z(\xi)}\text{Re}\left(\frac{(-1)^{\frac{-n^2}2}}{2^{n-1}}\iota\left(\bar{\xi}^i\frac{\partial}{\partial \bar{\xi}^i}\right)d\bar{\xi}^1\wedge\cdots\wedge d\bar{\xi}^n\wedge d\xi^1\cdots d\xi^n\right).\tag{3.2}\label{in1}
\end{align*}
\end{proof}

The following example implies that the definition of the volume of a indicatrix is natural.
\begin{example}\label{firstex}Let $(T'_zM,F|_z)$ be as before.
Suppose that $F_z=\|\cdot\|$ is a Hermitian norm. Note that $d\nu_z$ is independent of the choice of coordinates of $T'_zM$.
Then we choose a orthonormal basis $\{e_i\}$ for $(T'_zM, \|\cdot\|)$. Let $(\xi^i)$ denote the coordinates of $T'_zM$ with  respect to $\{e_i\}$. Clearly,
$h(\frac{\partial}{\partial \xi^i},\frac{\partial}{\partial \xi^j})=\delta_{ij}$.
By Proposition \ref{volin}, we have
\begin{align*}
\vol(z)&=\int_{\|\xi\|=1}\text{Re}\left(\frac{({-1})^{\frac{-n^2}2}}{2^{n-1}}\sum_{i=1}^n(-1)^{i-1}\bar{\xi}^id\bar{\xi}^1\wedge \cdots \widehat{d\bar{\xi}^i}\cdots\wedge d\bar{\xi}^n \wedge d\xi^1\wedge \cdots \wedge d\xi^n\right)\\
&=\vol(\mathbb{S}^{2n-1}),
\end{align*}
where $\mathbb{S}^{2n-1}$ is the $(2n-1)$-dimensional unit Euclidean sphere. That is, the volumes of indicatrices on a Hermitian manifold are always equal to $\vol(\mathbb{S}^{2n-1})$.
\end{example}

The properties of the indicatrices of a complex Finsler manifold are much different from those of a real Finsler manifold.  For a real Finsler $n$-manifold $(N,\mathcal {F})$, if $\mathcal {F}$ is positively homogeneous of degree $1$, then the volume of indicatrix $\vol(x)$ can never exceed $\vol(\mathbb{S}^{n-1})$, with equality if and only if $\mathcal {F}|_x$ is a Euclidean norm (cf. \cite[Proposition 14.9.1]{BCS}). But this result cannot be generalized to the complex Finsler setting. For example, let $M=\mathbb{C}^2$ and $F(z,\xi):=\sqrt[4]{|\xi^1|^4+|\xi^2|^4}$, where $z=(z^1,z^2)$ and $\xi=\xi^1\frac{\partial}{\partial z^1}+\xi^2\frac{\partial}{\partial z^2}$. Then direct calculation yields $\vol(z)\equiv\vol(\mathbb{S}^3)$. But $F$ is not a Hermitian metric but  a complex locally Minkowski metric.

In general cases, $\vol(z)$ is nonconstant. However, the example above implies that for a complex locally Minkowski space, the volumes of the indicatrices may be a constant. Note that a locally Minkowski metric is a Berwald metric (see \cite{A2}). Then we have the following result.
\begin{proposition}
Let $(M,F)$ be a connected complex Berwald manifold. Then $\vol(z)$ is a constant.
\end{proposition}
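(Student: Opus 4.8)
The plan is to isolate two facts and combine them. The first is that the number $\vol(z)$ depends only on the $\mathbb{C}$-linear isometry class of the complex Minkowski space $(T'_zM,F_z)$; the second is the structure theorem that, on a connected complex Berwald manifold, all the Minkowski spaces $(T'_zM,F_z)$ are mutually $\mathbb{C}$-linearly isometric. Since $M$ is connected, together these give the constancy of $\vol$.

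For the first fact, let $L\colon(V_1,\|\cdot\|_1)\to(V_2,\|\cdot\|_2)$ be a $\mathbb{C}$-linear map between complex Minkowski $n$-spaces with $\|Lv\|_2=\|v\|_1$ for all $v$. Because $L$ has constant coefficients and is $\mathbb{C}$-linear (hence intertwines $\partial$ with $\partial$ and $\bar\partial$ with $\bar\partial$), the chain rule applied to $\|\cdot\|_1^2=\|L(\cdot)\|_2^2$ shows that the Hermitian metrics $h_a=(h_a)_{i\bar j}\,d\xi^i\otimes d\bar\xi^j$ of Section 3 satisfy $L^*h_2=h_1$; realifying, the $\mathbb{R}$-linear map underlying $L$ pulls $\hat g_2$ back to $\hat g_1$. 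Hence $L$ restricts to a Riemannian isometry of the unit sphere $\{\|\cdot\|_1=1\}$ onto $\{\|\cdot\|_2=1\}$ that carries unit outward normals to unit outward normals, so $L^*d\nu_2=d\nu_1$ and therefore $\int_{\{\|\cdot\|_1=1\}}d\nu_1=\int_{\{\|\cdot\|_2=1\}}d\nu_2$. (Alternatively this can be read off directly from Proposition \ref{volin}: under $L$ the factors $\det h_{i\bar j}$, $F_z^{2n}$ and the $(2n-1)$-form in the statement pick up powers of $|\det L|$ that cancel.)

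For the second fact I would invoke the structure theory of complex Berwald manifolds: by \cite{A2} (see the remark following Theorem \ref{C-F}), a connected complex Berwald manifold is modeled on a complex Minkowski space, i.e.\ there is a fixed complex Minkowski space $(\mathbb{C}^n,\|\cdot\|_0)$ to which every $(T'_zM,F_z)$ is $\mathbb{C}$-linearly isometric. Combined with the first fact, $\vol(z)$ equals the value of $\vol$ on $(\mathbb{C}^n,\|\cdot\|_0)$ for all $z\in M$, hence is constant. The only point needing a little care is the bookkeeping in the first step — checking that realifying the complex isometry really preserves the Riemannian volume form and the outward normal so that the boundary form $d\nu$ is preserved; this is routine, since $\hat g$ is built from $h$ by the real-part construction of Section 3, which is natural under $\mathbb{C}$-linear maps, and "outward" is a topological notion. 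Apart from that there is no genuine obstacle, the heart of the matter being the cited structure theorem.
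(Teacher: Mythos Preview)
Your argument is correct and rests on the same key input as the paper's---Aikou's result that on a connected complex Berwald manifold parallel translation along any curve is a $\mathbb{C}$-linear map preserving $F$, so all the tangent Minkowski spaces $(T'_zM,F_z)$ are $\mathbb{C}$-linearly isometric. The paper, however, packages the rest differently: instead of isolating ``$\vol$ is a $\mathbb{C}$-linear isometry invariant'' as a separate lemma, it writes out the parallel-transport ODE explicitly, verifies the preservation of $h$ (equation (3.6)) via the partial metric compatibility of the Rund connection rather than by your chain-rule shortcut, and then---rather than pulling back $d\nu$ directly---passes through Stokes' theorem to rewrite $\vol(\gamma(t))$ as $2n$ times the $h$-volume of the unit ball in $T'_{\gamma(t)}M$, which is then pulled back to $T'_pM$. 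Your route is shorter and more conceptual: once one knows $F$ is preserved, the second derivatives $h_{i\bar j}$ are automatically preserved by the chain rule, and the naturality of the $d\nu$ construction under isometries is immediate. The paper's route has the virtue of being fully explicit (no appeal to ``modeled on a Minkowski space'' as a black box), but yours extracts the essential content with less computation.
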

\begin{proof}Given $p$ and $q$ in $M$, there exists a smooth curve $\gamma(t)$, $t\in[0,1]$ in $M$ such that $\gamma(0)=p$ and $\gamma(1)=q$. As in \cite{A2}, we define the covariant derivative of a smooth vector field $X=X^i\frac{\partial}{\partial z^i}$ along $\gamma$ by
\[
D_{\dot{\gamma}}X:=\left(\frac{dX^i}{dt}+X^j\Gamma^i_{j,k}(\gamma(t))\frac{dz^k\circ \gamma(t)}{dt}\right)\frac{\partial}{\partial z^i}.\tag{3.3}\label{prall}
\]
If $D_{\dot{\gamma}}X\equiv 0$, for all $t\in [0,1]$, then $X$ is called parallel along $\gamma(t)$. Let $P_t:T'_{p}M\rightarrow T'_{\gamma(t)}M$ denote the parallel translation induced by (\ref{prall}). It follows from \cite{A2} that $F(\gamma(t), P_t(X))=F(p,X)$ for all $X\in T'_pM$.

Denote by $(z^i,\xi^j)$ (resp. $(\varsigma^i,\zeta^j)$) a local coordinate system around $T'_pM$ (resp. $T'_{\gamma(t)}M$). Set $e_i:=\frac{\partial}{\partial z^i}|_p$ and $E_i(t):=P_t(e_i)$.
Since $\{E_i(t)\}$ is a basis for $T'_{\gamma(t)}M$, there exists non-singular matrices $(A^i_j(t))$ such that $E_i(t)=A_i^j \frac{\partial}{\partial \varsigma^j}|_{\gamma(t)}$. By (\ref{prall}), we have
\[
\frac{d A^i_s}{dt}+A^j_s\,\Gamma^i_{j,k}(\gamma(t))\frac{d\varsigma^k\circ \gamma(t)}{dt}=0\tag{3.4}\label{prafin}
\]

Recall $(\xi^i)$ and $(\zeta^i)$ are the coordinate systems of $T'_pM$ and $T'_{\gamma(t)}M$, respectively. From above, we have
\[
P_t(\xi^1,\cdots,\xi^n)=\left(\sum_i\xi^i\cdot A^1_i,\cdots,\sum_i\xi^i\cdot A^n_i\right)=(\zeta^1,\cdots,\zeta^n),
\]
which implies
\[
P_{t*}\frac{\partial}{\partial \xi^i}=A^k_i\frac{\partial}{\partial \zeta^k}, \ P_t^*d\zeta^i=A^i_j\,d\xi^j.\tag{3.5}\label{parll2}
\]

 By Theorem \ref{C-F}, (\ref{parll2}) and (\ref{prafin}), one can check that
\[
\frac{d}{dt}\left[h_{P_t(\xi)}\left(P_{t*}\frac{\partial}{\partial \xi^i},P_{t*}\frac{\partial}{\partial \xi^j}\right)\right]=0.\tag{3.6}\label{prall4}
\]

Set $B_{\gamma(t)}(r):=\{\zeta\in T'_{\gamma(t)}M: F(\gamma(t),\zeta)<r\}$. Note that $\det h_{i\bar{j}}({\gamma(t)},\zeta)$ can be view as a function on $S_{\gamma(t)}M$. Since $S_{\gamma(t)}M$ is compact, $|\det h_{i\bar{j}}({\gamma(t)},\zeta)|$ is bounded. Hence,
\[
\lim_{\epsilon\rightarrow 0^+}\int_{B_{\gamma(t)}(\epsilon)} d\mu_{\hat{g}}({\gamma(t)})=0.\tag{3.7}\label{parll3}
\]
Stokes' formula together with (\ref{parll3}), (\ref{prall4}) and (\ref{parll2}) then yields
\begin{align*}
&\vol(\gamma(t))=2\,\text{Re}\left(\int_{S_{\gamma(t)}M}\iota\left(\bar{\zeta}^i\frac{\partial}{\partial \bar{\zeta}^i}\right)d\mu_{h}({\gamma(t)})\right)\\
=&2n\,\text{Re}\left(\int_{B_{\gamma(t)}(1)}d\mu_{h}({\gamma(t)})\right)=2n\,\text{Re}\left(\int_{P_t(B_p(1))}d\mu_{h}({\gamma(t)})\right)\\
=&2n\,\text{Re}\left(\frac{(-1)^\frac{-n^2}{2}}{2^n}\int_{B_p(1)}P_t^*\left(\det h_{P_t(\xi)}\left(\frac{\partial}{\partial \zeta^i},\frac{\partial}{\partial \zeta^j}\right)d\bar{\zeta}^1\wedge\cdots \wedge d\bar{\zeta}^n\wedge d\zeta^1\wedge\cdots \wedge d{\zeta}^n \right)\right)\\
=&2n\,\text{Re}\left(\int_{B_p(1)} d\mu_{h}(p)\right)=\vol(p).
\end{align*}
We are done by letting $t=1$.\end{proof}

Now we define a form $\sigma_z$ on $T'_zM-\{0\}$ by
\[
\sigma_z:=r^*d\nu_z,\tag{3.8}\label{dekey}
\]
where $r:T'_zM-\{0\}\rightarrow S_zM$, $\xi\mapsto \xi/F_z(\xi)$.
\begin{proposition}\label{key}
For each $\xi\in T'_zM-\{0\}$, we have
\[
\sigma_z|_{\xi}=\frac{\det h_{i\bar{j}}(\xi)}{2^{n-1}F_z^{2n}(\xi)}\cdot\text{Re}\left(({-1})^{\frac{-n^2}2}\sum_{i=1}^n(-1)^{i-1}\bar{\xi}^id\bar{\xi}^1\wedge \cdots \widehat{d\bar{\xi}^i}\cdots \wedge d\bar{\xi}^n \wedge d\xi^1\wedge \cdots \wedge d\xi^n\right).
\]
\end{proposition}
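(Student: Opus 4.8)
The plan is to deduce the claimed formula from Proposition \ref{volin} by a homogeneity argument. Write $\Phi$ for the $(2n-1)$-form on $T'_zM-\{0\}$ on the right-hand side of the asserted identity, let $j\colon S_zM\hookrightarrow T'_zM-\{0\}$ be the inclusion, and recall $r\colon T'_zM-\{0\}\to S_zM$, $r(\xi)=\xi/F_z(\xi)$, so that $\rho:=j\circ r$ is the radial retraction $\xi\mapsto\xi/F_z(\xi)$. Since $F_z\equiv 1$ on $S_zM$, the factor $F_z^{-2n}$ is harmless there, so Proposition \ref{volin} is exactly the statement $j^*\Phi=d\nu_z$. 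As $r\circ j=\mathrm{id}_{S_zM}$ and $\sigma_z=r^*d\nu_z$ by (\ref{dekey}), we get $\sigma_z=r^*j^*\Phi=\rho^*\Phi$, and the proposition is equivalent to $\rho^*\Phi=\Phi$.

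The key point is the identity $\iota(E)\Phi=0$, where $E:=\xi^i\frac{\partial}{\partial\xi^i}+\bar{\xi}^i\frac{\partial}{\partial\bar{\xi}^i}$ is the real Liouville field on $T'_zM-\{0\}$. Denote by $\Theta$ the complex $(2n-1)$-form inside $\mathrm{Re}(\cdots)$ and put $\Lambda:=d\bar{\xi}^1\wedge\cdots\wedge d\bar{\xi}^n$, $\mu:=d\xi^1\wedge\cdots\wedge d\xi^n$; then $\Theta=\iota\bigl(\bar{\xi}^i\frac{\partial}{\partial\bar{\xi}^i}\bigr)\Lambda\wedge\mu$, and a short Cartan-calculus computation (using $\iota(V)^2=0$ and the derivation rule) gives
\[
\iota(E)\Theta=(-1)^{n-1}\,\iota\bigl(\bar{\xi}^i\tfrac{\partial}{\partial\bar{\xi}^i}\bigr)\Lambda\wedge\iota\bigl(\xi^i\tfrac{\partial}{\partial\xi^i}\bigr)\mu,\qquad \iota(E)\bar{\Theta}=(-1)^{n-1}\,\iota(E)\Theta .
\]
Hence $\iota(E)\,\mathrm{Re}\bigl((-1)^{-n^2/2}\Theta\bigr)=\tfrac12\bigl((-1)^{-n^2/2}+(-1)^{n-1}\overline{(-1)^{-n^2/2}}\bigr)\,\iota(E)\Theta$, which vanishes because $(-1)^{-n^2/2}$ is real when $n$ is even (and then $(-1)^{n-1}=-1$) and purely imaginary when $n$ is odd (and then $(-1)^{n-1}=1$). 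Since the scalar prefactor $\det h_{i\bar{j}}/(2^{n-1}F_z^{2n})$ commutes with $\iota(E)$, this yields $\iota(E)\Phi=0$. I also need that $\Phi$ is invariant under the $\mathbb{C}^{*}$-action $m_\lambda\colon\xi\mapsto\lambda\xi$: by Proposition \ref{cang} the function $\det h_{i\bar{j}}=\det G_{i\bar{j}}$ is homogeneous of degree $0$, while $F_z^{2n}$ and $\Theta$ both scale by $|\lambda|^{2n}$, so the factors cancel; in particular $\mathcal{L}_E\Phi=0$.

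To conclude $\rho^*\Phi=\Phi$, differentiate $\rho$: at each $\xi$ one finds $d\rho|_\xi(w)=(m_{1/F_z(\xi)})_*w-F_z(\xi)^{-1}\bigl(dF_z|_\xi(w)\bigr)\,E|_{\rho(\xi)}$, i.e. $d\rho|_\xi$ is the differential of the constant rescaling $m_{1/F_z(\xi)}$ corrected by a multiple of the Liouville field at $\rho(\xi)$. Plugging $2n-1$ such vectors into $\Phi|_{\rho(\xi)}$ and expanding multilinearly, every term containing a Liouville correction vanishes---by antisymmetry if it occurs in two or more slots, and by $\iota(E)\Phi=0$ if in exactly one slot---so only the term coming from $(m_{1/F_z(\xi)})_*$ survives, and by the $\mathbb{C}^{*}$-invariance of $\Phi$ it equals $\Phi|_\xi$. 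Thus $\rho^*\Phi=\Phi$ and $\sigma_z=\Phi$. (Equivalently: $\iota(E)\Phi=0$ and $\mathcal{L}_E\Phi=0$ say that $\Phi$ is basic for the real scaling action on $T'_zM-\{0\}$; the same holds for $\rho^*\Phi$; and both restrict to $d\nu_z$ on the section $S_zM$, so they coincide.) The only step needing care is the sign bookkeeping in $\iota(E)\Phi=0$; everything else is formal.
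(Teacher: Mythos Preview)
Your proof is correct. It differs genuinely from the paper's argument, so a brief comparison is warranted.

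The paper proceeds by a direct coordinate computation: working in real coordinates $(u^i)$ on $V_{\mathbb{R}}$ with $L(u)=F_z^{-1}(\mathcal{I}(u))$, it expands each $d[L(u)u^i]=L\,du^i+u^i\,dL$ in the pulled-back form, isolates the ``extra'' terms as a sum $\beta=\sum_{i\neq j}I_{ij}$, and checks by hand that $I_{ij}+I_{ji}=0$ (using $dL\wedge dL=0$). This is the concrete incarnation of the same phenomenon you exploit abstractly: the $dL$-factors are exactly the Liouville-direction corrections, and the pairwise cancellation $I_{ij}+I_{ji}=0$ is the coordinate shadow of $\iota(E)\Phi=0$ together with antisymmetry.

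Your route---showing $\iota(E)\Phi=0$ and $m_\lambda^*\Phi=\Phi$, and then arguing that these two facts force $\rho^*\Phi=\Phi$---is more conceptual and explains \emph{why} the cancellation occurs rather than merely verifying it. The price is the parity/sign analysis of $(-1)^{-n^2/2}$, which you handle correctly: for $n$ even the number is real and $(-1)^{n-1}=-1$ gives the cancellation, while for $n$ odd it is purely imaginary and $(-1)^{n-1}=1$ again gives zero. The paper's computation avoids this sign case-split at the cost of a longer explicit expansion. Either argument establishes the proposition; yours generalizes more readily (it is the standard ``basic form'' argument for radial retractions), while the paper's has the virtue of being entirely self-contained and free of any subtlety about branches of $(-1)^{-n^2/2}$.
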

\begin{proof}Denote by $(u^i)$ the coordinate system of $V_{\mathbb{R}}$, i.e., $u^{2i-1}=x^i$ and $u^{2i}=y^i$. For $\xi\neq 0$, set $\mathcal {I}(u):=\xi$, $L(\xi):=F^{-1}_{z}(\xi)$ and $L(u):=L(\mathcal {I}(u))$.
Since $r(\xi)=L(\xi)\cdot \xi$, we have
\begin{align*}
\mathcal {I}^*(\sigma_z|_\xi)
=\frac{\det h}{F^{2n}_{z}(\mathcal {I}(u))}\frac{\sum_{i=1}^{2n}(-1)^{i-1}[L(u)u^i]d[L(u)u^1]\wedge\cdots \widehat{d[L(u)u^i]}\cdots\wedge  d[L(u)u^{2n}]}{L^{2n}(u)}.
\end{align*}
Set $\beta:=(\det h/F^{2n}_{z}(\mathcal {I}(u)))^{-1}\cdot\left[\mathcal {I}^*(\sigma_z)-\alpha\right]$, where
\[
\alpha:=\frac{\det h}{F^{2n}_{z}(\mathcal {I}(u))}{\sum_{i=1}^{2n}(-1)^{i-1}u^idu^1\wedge\cdots \widehat{du^i}\cdots\wedge  du^{2n}}.
\]

Since $dL\wedge dL=0$, we have $\beta=\sum_{i\neq j}I_{ij}$, where
\begin{align*}
I_{ij}=&\frac{(-1)^{i-1}[L(u)u^i][du^1\cdot L(u)]\wedge\cdots\wedge [dL(u)\cdot u^j]\wedge \cdots\wedge\widehat{[d u^i\cdot L(u)]}\wedge\cdots\wedge [d u^n \cdot L(u)]}{L^n(u)}\\
=&\left\{
\begin{array}{lll}
&\frac{(-1)^{i-1}u^iu^j du^1\wedge \cdots\wedge du^{j-1}\wedge dL\wedge du^{j+1}\wedge\cdots\wedge \widehat{du^{i}}\wedge\cdots\wedge du^n}{L(u)},&i>j,\\
&\frac{(-1)^{i-1}u^iu^j du^1\wedge \cdots\wedge \widehat{du^{i}}\wedge\cdots\wedge du^{j-1}\wedge dL\wedge du^{j+1}\wedge\cdots\wedge du^n}{L(u)},&j<i.
\end{array}
\right.
\end{align*}
Since $I_{ij}+I_{ji}=0$, $\beta=0$. Hence, $\sigma_z=\mathcal {I}^{-1*}\alpha$. The conclusion then follows from (\ref{inreal}) and (\ref{in1}).
\end{proof}

$\sigma_z$ plays a pivotal role in calculating the local degree of a holomorphic vector field at a non-degenerate
zero, which allows us to establish the Gauss-Bonnet-Chern theorem. See Section 5 for more details.

\section{The exterior differential equations}
Let $(M,F)$ be a closed complex Finsler $n$-manifold and
let $X$ be a holomorphic vector field on $M$ with non-degenerate zeros $\{\zeta_1,\cdots,\zeta_k\}$.
Set $M_0:=M-\cup_{i=1}^k\{\zeta_i\}$.

We first recall the definition of tensorial matrices (see \cite[Definition 4.1]{CCL}).
\begin{definition}
If for every local frame field $S$ of a vector bundle $E\overset{\mathfrak{p}}{\rightarrow} N$, there is a given $n\times n$ matrix $\Phi_S$ of exterior differential $k$-forms satisfying
\[
\left.
\begin{array}{lll}
&S'=A\cdot S\\
&\det A\neq 0
\end{array}
\right\}
\Rightarrow \Phi_{S'}=A\cdot \Phi_{S} \cdot A^{-1},
\]
then we call $\{\Phi_S\}$ is a tensorial matrix on $N$ with respect to $E$.
\end{definition}

We now construct a tensorial matrix on $M_0$.
Choose an arbitrary local coordinate system $(z^i)$ of $M$, set
\[
\Theta:=-\iota(X)X^*\varpi-\mu,
\]
where $\mu:=(\frac{\partial X^i}{\partial z^j})$ and $\varpi=(\varpi^i_j)$ is the Chern-Finsler connection $1$-form matrix with respect to $\{\partial/\partial{z}^i\}$.
\begin{proposition}\label{impr}
$\Theta$ determines a tensorial matrix defined on $M_0$ with respect to the bundle $T'M_0$.
\end{proposition}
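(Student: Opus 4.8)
The plan is to verify the transformation rule $\Theta_{S'} = A \cdot \Theta_S \cdot A^{-1}$ directly under a change of local holomorphic coordinates, where $S = \{\partial/\partial z^i\}$ and $S' = \{\partial/\partial w^i\}$ are two frames related by $S' = A \cdot S$ with $A = (\partial z^j/\partial w^i)$ the (nonsingular, holomorphic) Jacobian matrix. The quantity $\Theta = -\iota(X) X^*\varpi - \mu$ has two pieces; I would track each separately. The connection $1$-form matrix $\varpi$ is, as usual, not tensorial: under $S' = A\cdot S$ it transforms as $\varpi_{S'} = A\,\varpi_S\,A^{-1} + A\,(dA^{-1})$, or equivalently $\varpi_{S'} = A\,\varpi_S\,A^{-1} - (dA)\,A^{-1}$. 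Pulling back by the section $X: M_0 \to T'M\backslash 0$ (which makes sense precisely because $X$ is zero-free on $M_0$) commutes with matrix operations, so $X^*\varpi_{S'} = A\,(X^*\varpi_S)\,A^{-1} - (dA)\,A^{-1}$, where now $A$ and $dA$ are pulled back along $X$, i.e. evaluated at the point of $M_0$ (they depend only on the base point since the coordinate change is holomorphic on $M$).

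Next I would contract with $X$. Since $\iota(X)$ is a derivation that kills functions and matrices of functions, $\iota(X)\bigl(A\,(X^*\varpi_S)\,A^{-1}\bigr) = A\,\bigl(\iota(X)X^*\varpi_S\bigr)\,A^{-1}$ and $\iota(X)\bigl((dA)A^{-1}\bigr) = \bigl(\iota(X)dA\bigr)A^{-1} = (XA)\,A^{-1}$, where $XA$ denotes the matrix obtained by applying the vector field $X$ to each entry of $A$. Therefore
\[
-\iota(X)X^*\varpi_{S'} = A\,\bigl(-\iota(X)X^*\varpi_S\bigr)\,A^{-1} + (XA)\,A^{-1}.
\]
It remains to handle $\mu = (\partial X^i/\partial z^j)$, i.e. the matrix of the $(1,0)$-part of $DX$ viewed naively in coordinates; the key computation is the classical fact that under $z \to w$ the components $\widetilde X^i = (\partial w^i/\partial z^j) X^j$ satisfy $\partial \widetilde X^i/\partial w^k = (\partial w^i/\partial z^j)(\partial X^j/\partial z^l)(\partial z^l/\partial w^k) + (\partial^2 w^i/\partial z^j \partial z^l)(\partial z^l/\partial w^k) X^j$. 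In matrix form, writing $B = A^{-1} = (\partial w^i/\partial z^j)$, this reads $\mu_{S'} = B\,\mu_S\,B^{-1} + (\text{Hessian term})$. A short manipulation identifies the Hessian term with $-(XA)A^{-1}$ rewritten in the $S'$-frame — more precisely one shows $\mu_{S'} = A^{-1}\mu_S A - A^{-1}(XA)$ after reindexing, which when rearranged gives exactly $-\mu_{S'} = A(-\mu_S)A^{-1} - (XA)A^{-1}$ conjugated appropriately; I would do this bookkeeping carefully so the anomalous $(XA)A^{-1}$ term cancels the one coming from $\varpi$. Adding the two contributions yields $\Theta_{S'} = A\,\Theta_S\,A^{-1}$, which is the defining property of a tensorial matrix on $M_0$ with respect to $T'M_0$.

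The main obstacle is purely organizational rather than conceptual: getting the two inhomogeneous ("anomalous") terms — the $-(dA)A^{-1}$ coming from the connection transformation and the second-derivative term coming from differentiating the vector-field components — to line up with consistent index placement and matrix order so that their contractions with $X$ are genuinely equal and cancel. One must be scrupulous about whether $A$ stands for $(\partial z^j/\partial w^i)$ or its inverse, about left versus right multiplication, and about the fact that $\varpi$ a priori lives on $T'M\backslash 0$ while $\mu$ lives on $M$ — the pullback by $X$ is what reconciles the two domains, and I would emphasize that $X$ being holomorphic with zeros only outside $M_0$ is exactly what licenses this pullback and the section $X: M_0 \to T'M\backslash 0$. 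No deep input beyond Theorem \ref{C-F} (for the standard connection-form transformation law) is needed.
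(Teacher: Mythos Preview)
Your approach is essentially identical to the paper's: both proofs compute the transformation law for $\varpi$ under a holomorphic coordinate change, pull back by $X$ and contract with $X$, then separately compute the transformation of $\mu=(\partial X^i/\partial z^j)$ via the chain rule, and observe that the inhomogeneous $\iota(X)\,dA$-type terms from the two pieces cancel to give $\widetilde\Theta = A\,\Theta\,A^{-1}$. The paper carries out exactly the bookkeeping you describe (arriving at $\widetilde\mu = (A\mu - \iota(X)dA)A^{-1}$ and $\iota(X)X^*\widetilde\varpi = (\iota(X)dA + A\,\iota(X)X^*\varpi)A^{-1}$), so once you fix your index and sign conventions as you yourself flag, your argument coincides with it line for line.
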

\begin{proof}Let $(\varsigma^i)$ be another coordinate on $M$ and set $A=A(z^i):=\left(\frac{\partial z^j}{\partial \varsigma^i}\right)$. It is easy to check that
\begin{align*}
dA^j_i=\frac{\partial^2 z^j}{\partial \varsigma^i\partial \varsigma^k} \cdot(A^{-1})^k_l\cdot dz^l.\tag{4.1}\label{trans1}
\end{align*}

Denote by $\widetilde{\varpi}$ the Chern-Finsler connection $1$-forms matrix with respect to $\{{\partial}/{\partial {\varsigma}^i}\}$. Clearly,
\[
\widetilde{\varpi}=(d A+A\cdot\varpi)\cdot A^{-1}.
\]
Since $A=A(z^i)$ and $X^*dz^i=dz^i$, we have
\[
X^*\widetilde{\varpi}=(d A+A\cdot X^*\varpi)\cdot A^{-1},
\]
which implies
\[
\iota(X) X^*\widetilde{\varpi}=(\iota(X)d A+A\cdot \iota(X)X^*\varpi)\cdot A^{-1}.\tag{4.2}\label{trans2}
\]
Set
\[
X=:\widetilde{X}^i\frac{\partial}{\partial \varsigma^i},\ \widetilde{\mu}^j_i:=\frac{\partial \widetilde{X}^j}{\partial \varsigma^i}.
\]
Note that
\begin{align*}
\widetilde{X}^i=X^k \cdot(A^{-1})_k^i,\
\frac{\partial}{\partial \varsigma^i}=A_i^j\frac{\partial}{\partial z^j}. \tag{4.3}\label{trans3}
\end{align*}
(\ref{trans3}) together with (\ref{trans1}) then yields
\begin{align*}
\widetilde{\mu}^i_j&=A^m_j\cdot\frac{\partial X^k}{\partial z^m}\cdot(A^{-1})^i_k-X^k\cdot (A^{-1})^s_k\cdot A^m_j\cdot \frac{\partial A^l_s}{\partial z^m}\cdot (A^{-1})^i_l\\
&=A^m_j\cdot\mu^k_m\cdot(A^{-1})^i_k-X^k\cdot (A^{-1})^s_k\cdot \frac{\partial^2z^l}{\partial\varsigma^s\partial\varsigma^j}\cdot  (A^{-1})^i_l\\
&=A^m_j\cdot\mu^k_m\cdot(A^{-1})^i_k-\iota(X)dA_j^l\cdot (A^{-1})^i_l,
\end{align*}
that is,
\[
\widetilde{\mu}=(A\cdot\mu -\iota(X)dA)\cdot A^{-1}.\tag{4.4}\label{trans4}
\]
Combining (\ref{trans2}) with (\ref{trans4}), we obtain
\[
-\widetilde{\mu}- \iota(X) X^*\widetilde{\varpi}=A\cdot(-\mu- \iota(X)X^*\varpi)\cdot A^{-1},
\]
Hence, $\Theta^j_i\frac{\partial}{\partial z^j}\otimes dz^i$ is a tensor filed on $M_0$, which determines a tensorial matrix.
\end{proof}

\begin{lemma}Let $\Omega$ denote the curvature matrix of the Chern-Finsler connection. Thus, on $M_0$, we have
\[
\bar{\partial}\Theta=\iota(X)[X^*\Omega].\tag{4.5}\label{usef2}
\]
\end{lemma}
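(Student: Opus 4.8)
The plan is to verify the identity $\bar\partial\Theta=\iota(X)[X^*\Omega]$ by a direct computation on $M_0$, working in an arbitrary local coordinate system $(z^i)$ and exploiting the fact that $X$ is holomorphic together with the structure equations for the Chern-Finsler connection collected in Theorem \ref{cur}. Since $\Theta=-\iota(X)X^*\varpi-\mu$ with $\mu=(\partial X^i/\partial z^j)$, and since $\mu$ has holomorphic entries (because $X$ is a holomorphic vector field, so each $X^i$ is holomorphic), we have $\bar\partial\mu=0$; hence the whole content is to compute $\bar\partial\bigl(\iota(X)X^*\varpi\bigr)$ and match it against $-\iota(X)[X^*\Omega]$.

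First I would record how $\bar\partial$ interacts with the pull-back $X^*$ and the contraction $\iota(X)$. Because $X$ is holomorphic, $X^*$ commutes with $\bar\partial$ in the appropriate sense on forms living on $T'M\backslash0$ pulled back along the section $z\mapsto(z,X(z))$; more precisely, $X^*$ sends $(p,q)$-forms to $(p,q)$-forms and $\bar\partial X^*=X^*\bar\partial$. Next, for the contraction: $X$ is a vector field of type $(1,0)$, so $\iota(X)$ lowers holomorphic degree by one and, crucially, $\bar\partial$ and $\iota(X)$ satisfy the anticommutation/Cartan-type relation $\bar\partial\circ\iota(X)+\iota(X)\circ\bar\partial=\iota(\bar\partial X)$ acting on forms; but $\bar\partial X=0$ since $X$ is holomorphic, so $\bar\partial\iota(X)=-\iota(X)\bar\partial$ on forms of the relevant type. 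Applying this to $\alpha:=X^*\varpi$ gives
\[
\bar\partial\bigl(\iota(X)X^*\varpi\bigr)=-\iota(X)\bar\partial(X^*\varpi)=-\iota(X)X^*(\bar\partial\varpi)=-\iota(X)X^*\Omega,
\]
where in the last step I use $\Omega=\bar\partial\varpi$ from Theorem \ref{cur}(2). Combining with $\bar\partial\mu=0$ yields $\bar\partial\Theta=-\bar\partial(\iota(X)X^*\varpi)=\iota(X)X^*\Omega=\iota(X)[X^*\Omega]$, which is exactly \eqref{usef2}.

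The main obstacle — and the place where one must be careful rather than merely formal — is justifying the two commutation facts in the Finsler setting, since $\varpi$ lives on $T'M\backslash0$ and is pulled back via the section $s_X:z\mapsto(z,X(z))$, which is only defined on $M_0$ (away from the zeros of $X$, where $(z,X(z))$ hits the zero section). One has to check that $s_X$ is holomorphic (immediate, as $X$ is holomorphic) so that $s_X^*$ intertwines $\bar\partial$ on $M_0$ with $\bar\partial$ on $T'M\backslash0$, and that contracting the pulled-back connection form against $X$ — rather than against $ds_X$ of something — is the correct reading of the notation $\iota(X)X^*\varpi$; here $\iota(X)$ acts on the $M_0$-form $X^*\varpi$ using the vector field $X$ on $M_0$, and since $X^*dz^i=dz^i$ the vertical parts $\delta\xi^l$ of $\varpi$ get pulled back to $d X^i+N^i_j\,dz^j$-type $(1,0)$-forms, all of which are handled uniformly by the Cartan identity once $\bar\partial X=0$ is invoked. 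I would also double-check that no curvature term is lost: $X^*\Omega$ still decomposes as in \eqref{crfhe} with $d\bar z^l$ and $\delta\bar\xi^l$ pulled back, and $\iota(X)$ (being type $(1,0)$) only hits the $dz^k$ and $\delta\xi^k$ legs, which is consistent with $\bar\partial\Theta$ being a $(0,1)$-valued matrix of forms — a useful sanity check on the final identity. Once these bookkeeping points are settled, the proof is the three-line computation above.
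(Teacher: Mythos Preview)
Your proposal is correct and follows essentially the same route as the paper: both arguments use the anticommutation $\bar\partial\circ\iota(X)+\iota(X)\circ\bar\partial=0$ (valid since $X$ is holomorphic), the commutation $X^*\circ\bar\partial=\bar\partial\circ X^*$, and the structure equation $\Omega=\bar\partial\varpi$ to reduce the claim to the three-line chain you wrote. Your additional bookkeeping remarks (holomorphicity of the section $s_X$, type considerations) are sound and merely make explicit what the paper leaves implicit.
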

\begin{proof}Recall that $\iota(X)\circ \bar{\partial}+\bar{\partial}\circ \iota(X)=0$ (see \cite{B}). By induction hypothesis, one can show that
\[
X^*\circ \bar{\partial}-\bar{\partial}\circ X^*=0,\tag{4.6}\label{usef}
\]
where the first $\bar{\partial}$ is defined on $\mathscr{A}(T'M_0)$ while the second $\bar{\partial}$ is defined on $\mathscr{A}(M_0)$. From above, we obtain
\[
\bar{\partial}\Theta=-\bar{\partial}[\iota(X)X^*\varpi]=\iota(X)[\bar{\partial}X^*\varpi]=\iota(X)[X^*\bar{\partial}\varpi]=\iota(X)[X^*\Omega].
\]
\end{proof}

As in \cite{BC}, for any $ A,B\in \mathcal {M}_{n\times n}$ and $\lambda\in \mathbb{C}$, we define
\[
\det(\lambda A+B)=:\sum_{j=0}^{n}\lambda^j {\det}^j(A;B), \ A,B\in \mathcal {M}_{n\times n}.
\]

Since $\Theta$ and $X^*\Omega$ are two tensorial matrices on $M_0$, all $\det^j(X^*\Omega;\Theta)$s are well-defined. Inspired by the success of the constructions in \cite{C}, we define the following polynomials
\[
\Psi_j:=\omega_X\wedge (\bar{\partial}\omega_X)^{n-j-1}\wedge {\det}^j\left(\frac{\sqrt{-1}}{2\pi}X^*\Omega;\frac{\sqrt{-1}}{2\pi}\Theta\right)\in \mathscr{A}^{n,n-1}(M_0), \ 0\leq j\leq n-1,
\]
where
\[
\omega_X:=\frac{g_X(\cdot,X)}{F^2(X)}\in \mathscr{A}^{1,0}(M_0).
\]

\begin{lemma}\label{Tanss}
Let $c_n(\pi^*T'M;\nabla)$ denote the $n$-th Chern form of $\pi^*T'M$ defined by the Chern-Finsler connection. Thus, on $M_0$, we have
\[
X^*c_n(\pi^*T'M;\nabla)=-d\Psi,
\]
where $\Psi:=\sum_{j=0}^{n-1}\Psi_j$.
\end{lemma}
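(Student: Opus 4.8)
The plan is to verify the identity $X^{*}c_{n}(\pi^{*}T'M;\nabla)=-d\Psi$ on $M_{0}$ by a direct transgression computation in the spirit of Chern \cite{C}, exploiting the fact that $\Psi$ has been built out of exactly the quantities ($\omega_{X}$, $\bar\partial\omega_{X}$, $X^{*}\Omega$, $\Theta$) whose differentials we already control. First I would record the basic differential relations that will feed the calculation: by Lemma \ref{Tanss}'s preceding lemma, $\bar\partial\Theta=\iota(X)[X^{*}\Omega]$ on $M_{0}$; by Theorem \ref{cur}, $\Omega=\bar\partial\varpi$, $\bar\partial\Omega=0$ and $\partial\Omega=-[\Omega,\varpi]$, which pull back and contract to give $\bar\partial X^{*}\Omega=0$ and a usable formula for $\partial\Theta$ in terms of $X^{*}\varpi$, $X^{*}\Omega$, $\Theta$ and $\mu$; and I would compute $d\omega_{X}$, separating its $(1,1)$ and $(2,0)$ parts. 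The key structural observation is that, since $X$ is holomorphic, $\mathcal{L}_{X}=\iota(X)d+d\iota(X)$ acting on forms pulled back from $M$ interacts cleanly with the splitting $d=\partial+\bar\partial$, and $\iota(X)\circ\bar\partial+\bar\partial\circ\iota(X)=0$; these let one move $\iota(X)$ past the exterior derivatives that appear.

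Next I would set up the generating-function bookkeeping. Following \cite{BC}, write $\det\!\big(\tfrac{\sqrt{-1}}{2\pi}(\lambda\, X^{*}\Omega+\Theta)\big)=\sum_{j}\lambda^{j}\,{\det}^{j}\!\big(\tfrac{\sqrt{-1}}{2\pi}X^{*}\Omega;\tfrac{\sqrt{-1}}{2\pi}\Theta\big)$, and assemble
\[
\widetilde{\Psi}:=\omega_{X}\wedge\sum_{j=0}^{n-1}(\bar\partial\omega_{X})^{n-j-1}\wedge\lambda^{j}\,{\det}^{j}\Big(\tfrac{\sqrt{-1}}{2\pi}X^{*}\Omega;\tfrac{\sqrt{-1}}{2\pi}\Theta\Big),
\]
so that $\Psi=\widetilde\Psi|_{\lambda=1}$. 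The point of the parameter $\lambda$ is that $\det\big(\tfrac{\sqrt{-1}}{2\pi}(\lambda X^{*}\Omega+\Theta)\big)$ is, up to normalization, the Chern polynomial evaluated on a matrix whose "curvature part'' is $X^{*}\Omega$; the standard Bianchi-type identity $d\,\det^{j}(\cdots)$ can be expressed through the covariant derivative of $\Theta$, and by the lemma above $\bar\partial\Theta$ contributes precisely $\iota(X)X^{*}\Omega$, which is the mechanism that converts the $(n,n-1)$-form $\Psi$ back into the $(n,n)$-form $X^{*}c_{n}$. Concretely I would differentiate $\widetilde\Psi$, use $d\omega_{X}\wedge\omega_{X}$-type cancellations ($\omega_{X}\wedge\omega_{X}=0$ kills many terms), use $\bar\partial(\bar\partial\omega_{X})=0$, and use the identity for $\bar\partial\Theta$ to collapse the alternating sum over $j$ telescopically. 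At $\lambda=1$ the surviving term should be exactly $-X^{*}c_{n}(\pi^{*}T'M;\nabla)=-\det\big(\tfrac{\sqrt{-1}}{2\pi}X^{*}\Omega\big)$, after checking that $\omega_{X}\wedge\bar\partial\omega_{X}$ contracted appropriately reproduces the $(n,n)$ volume normalization; here the key ingredient is that $\iota(X)\omega_{X}=1$ on $M_{0}$ (since $\omega_{X}=g_{X}(\cdot,X)/F^{2}(X)$ and $g_{X}(X,X)=F^{2}(X)$), so that $\iota(X)$ applied to the top form $\det(\tfrac{\sqrt{-1}}{2\pi}X^{*}\Omega)\wedge\omega_{X}$-type expressions does the right normalization.

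The main obstacle I expect is the combinatorial cancellation in the sum $\sum_{j}$: one must show that all the "cross terms'' — those involving $\partial\omega_{X}$, $\partial\Theta$, $\partial(X^{*}\Omega)$, and the mixed derivatives of the $\det^{j}$ — organize into a telescoping series whose only uncancelled contribution is the Chern form, and that the boundary terms from the telescope vanish because $(\bar\partial\omega_{X})^{n}=0$ on the $(2n-1)$-real-dimensional fibre data encoded in $\omega_{X}$ (more precisely, because $\omega_{X}$ and $\bar\partial\omega_{X}$ together cannot fill up the top degree unless combined with a genuine curvature factor). This is exactly the place where Chern's original transgression argument \cite{C} is delicate, and the Finsler setting adds the complication that $X^{*}\varpi$ and $\Theta$ carry the vertical ($\delta\xi$) components of the Chern–Finsler connection — so I would need to be careful that, after pulling back by the section $X:M_{0}\to T'M\backslash 0$, those vertical pieces are genuinely differential forms on $M_{0}$ and obey the relations of Theorem \ref{cur} under $X^{*}$. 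Once the algebra is arranged, the identity $X^{*}c_{n}(\pi^{*}T'M;\nabla)=-d\Psi$ drops out; I would close by noting that $\Psi\in\mathscr{A}^{n,n-1}(M_{0})$ so the statement is consistent with $c_{n}$ being an $(n,n)$-form, and that this is the transgression formula that will later be integrated against $1/\vol(z)$ to prove Theorem \ref{Th1}.
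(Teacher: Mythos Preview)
Your outline has the right ingredients --- the identity $\bar\partial\Theta=\iota(X)[X^{*}\Omega]$ is indeed the engine, and a telescoping sum is the mechanism --- but you have overcomplicated the bookkeeping and are missing the decisive step.

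The ``main obstacle'' you anticipate is not there. Because $\Psi\in\mathscr{A}^{n,n-1}(M_0)$, bidegree alone forces $\partial\Psi=0$ and hence $d\Psi=\bar\partial\Psi$; the cross terms involving $\partial\omega_X$, $\partial\Theta$, $\partial(X^{*}\Omega)$ simply never appear, and the auxiliary parameter $\lambda$ buys you nothing. With $\bar\partial^{2}=0$ and the identity $\bar\partial{\det}^{j}(X^{*}\Omega;\Theta)=\iota(X){\det}^{j+1}(X^{*}\Omega;\Theta)$ (which follows from $\bar\partial\Theta=\iota(X)X^{*}\Omega$, $\bar\partial X^{*}\Omega=0$, and $\iota(X)\Theta=0$), one obtains directly
\[
d\Psi=\sum_{j=0}^{n-1}(\bar\partial\omega_X)^{n-j}\wedge{\det}^{j}\;-\;\sum_{j=1}^{n}\omega_X\wedge(\bar\partial\omega_X)^{n-j}\wedge\iota(X){\det}^{j},
\]
suppressing the $\tfrac{\sqrt{-1}}{2\pi}$ factors. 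This expression, however, is \emph{not} manifestly equal to $-X^{*}c_n$, and the paper does not try to collapse it term by term. The step your plan lacks is to contract both sides with $X$: using $\iota(X)\omega_X=1$, $\iota(X)\bar\partial\omega_X=-\bar\partial(\iota(X)\omega_X)=0$, and $\iota(X)^{2}=0$, the two sums above telescope cleanly to $\iota(X)d\Psi=-\iota(X){\det}^{n}=-\iota(X)X^{*}c_n$. Then $d\Psi+X^{*}c_n$ is a form of top degree on $M_0$ annihilated by the nowhere-vanishing field $X$, hence is zero. Your observation that $\iota(X)\omega_X=1$ is exactly the right ingredient, but it is used to make the telescope work \emph{after} contraction, combined with the top-degree argument --- not merely as a normalization at the end. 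Without this contraction-then-top-degree trick, your direct computation of $d\Psi$ would stall at the displayed expression above.
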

\begin{proof}Note that $\bar{\partial}\Omega=0$ and $\iota(X)\Theta=0$. Thus,
(\ref{usef}) together with (\ref{usef2}) yields
\[
\bar{\partial}{\det}^j(X^*\Omega;\Theta)=\iota(X) {\det}^{j+1}(X^*\Omega;\Theta).
\]
Since $\Psi\in \mathscr{A}^{n,n-1}(M_0)$, we have
\begin{align*}
d\Psi=\bar{\partial}\Psi=&-\sum_{j=1}^{n}\omega_X\wedge(\bar{\partial}\omega_X)^{n-j}\wedge \iota(X){\det}^{j}\left(\frac{\sqrt{-1}}{2\pi}X^*\Omega;\frac{\sqrt{-1}}{2\pi}\Theta\right)\\
&+\sum_{j=0}^{n-1}(\bar{\partial}\omega_X)^{n-j}\wedge {\det}^j\left(\frac{\sqrt{-1}}{2\pi}X^*\Omega;\frac{\sqrt{-1}}{2\pi}\Theta\right).
\end{align*}
Note that $\iota(X)\circ\bar{\partial}\omega_X=-\bar{\partial}\circ\iota(X)\omega_X=-\bar{\partial}1=0$. Hence, we obtain
\begin{align*}
\iota(X)\circ d\Psi=&-\sum_{j=1}^{n}(\bar{\partial}\omega_X)^{n-j}\wedge \iota(X){\det}^{j}\left(\frac{\sqrt{-1}}{2\pi}X^*\Omega;\frac{\sqrt{-1}}{2\pi}\Theta\right)\\
&+\sum_{j=0}^{n-1}(\bar{\partial}\omega_X)^{n-j}\wedge\iota(X) {\det}^j\left(\frac{\sqrt{-1}}{2\pi}X^*\Omega;\frac{\sqrt{-1}}{2\pi}\Theta\right)\\
=&-\iota(X){\det}\left(\frac{\sqrt{-1}}{2\pi}X^*\Omega\right).
\end{align*}
That is
\[
\iota(X)\left[d\Psi+X^*c_n(\pi^*T'M;\nabla)\right]=0.
\]
Since $[d\Psi+X^*c_n(\pi^*T'M;\nabla)]$ is a form of top degree and $X$ dose not vanishes on $M_0$, the conclusion follows.
\end{proof}

\begin{remark}By the Bott-Chern form \cite{BC}, one can show
that there exists a form $\eta\in \mathscr{A}(T'M\backslash0)$ such that $c_n(\pi^*T'M;\nabla)=d\eta$. Thus, $X^*c_n(\pi^*T'M;\nabla)=dX^*\eta$, which is similar to Lemma \ref{Tanss}. But it seems impossible to express $\eta$ in term of $dz^i,d \xi^j$, which makes it indeed difficult to calculate $\int_MdX^*\eta$ in the Finsler setting. However,
for a Hermitian manifold, without knowing the explicit formula of $\eta$, one can still calculate the value of $\int_MdX^*\eta$ by constructing a trivial bundle. See \cite{BC,Co} for more details.
\end{remark}

Since $c_n(\pi^*T'M;\nabla)$ is real, we have
\[
X^*c_n(\pi^*T'M;\nabla)=\overline{X^*c_n(\pi^*T'M;\nabla)}=-\overline{d\Psi}=-d\bar{\Psi},
\]
which implies
\[
X^*c_n(\pi^*T'M;\nabla)+\sum_{j=1}^{n-1}d\,\text{Re}(\Psi_j)=-d\,\text{Re}(\Psi_0).\tag{4.7}\label{trans4.7}
\]

Let $\varpi_H$ and $\varpi_V$ denote the horizontal component and the vertical component of $\varpi$, respectively.  Recall that $\varpi_H$ is the Rund connection $1$-forms matrix. Hence,
the same argument as in Proposition \ref{impr} shows
\[
\Theta_H:=-\iota(X)X^*\varpi_H-\mu,\ \Theta_V:=\Theta-\Theta_H=-\iota(X)X^*\varpi_V
\]
are two tensorial matrices.
Hence, all $\det^j(\Theta_V;\Theta_H)$s, $0\leq j\leq n$ are well-defined and therefore,
\begin{align*}
\Psi_0&=\left(\frac{\sqrt{-1}}{2\pi}\right)^n\det(\Theta)\,\omega_X\wedge (\bar{\partial}\omega_X)^{n-1}\\
&=\left(\frac{\sqrt{-1}}{2\pi}\right)^n\sum_{j=0}^{n-1}{\det}^j (\Theta_V;\Theta_H)\,\omega_X\wedge (\bar{\partial}\omega_X)^{n-1}+\left(\frac{\sqrt{-1}}{2\pi}\right)^n\det(\Theta_H)\,\omega_X\wedge (\bar{\partial}\omega_X)^{n-1}\\
&=:\Lambda_1+\Lambda_2.
\end{align*}
In view of (\ref{trans4.7}) and the definition of currents (cf. \cite{D}), we have the following result.
\begin{proposition}\label{Eform}
Define a current $\mathfrak{E}$ of order $0$ on $M$ by
\[
\mathfrak{E}:={\sum_{j=1}^{n-1}d\text{Re}(\Psi_j)+d\text{Re}(\Lambda_1)+d\log \vol(z)\wedge \text{Re}(\Lambda_2)}.\tag{4.8}\label{correctitem}
\]
Then we have the following equality on $M$ (in the sense of current)
\[
\left[\frac{X^*c_n(\pi^*T'M)+\mathfrak{E}}{\vol(z)}\right]=-d\left(\frac{\text{Re}(\Lambda_2)}{\vol(z)}\right).\tag{4.9}\label{transsgr}
\]
Here, $\vol(z)$ is the volume of the indicatrix $S_zM$ defined in Section 3.
\end{proposition}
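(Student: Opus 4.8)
The plan is to reduce the asserted current identity (\ref{transsgr}) to the smooth identity (\ref{trans4.7}) together with an elementary computation of $d\mathrm{Re}(\Lambda_2)$. First I would observe that on the smooth manifold $M_0$ all the forms involved are smooth, so that (\ref{trans4.7}) holds there verbatim, and that $\Psi_0=\Lambda_1+\Lambda_2$ gives
\[
X^*c_n(\pi^*T'M;\nabla)+\sum_{j=1}^{n-1}d\,\mathrm{Re}(\Psi_j)+d\,\mathrm{Re}(\Lambda_1)=-d\,\mathrm{Re}(\Lambda_2)\quad\text{on }M_0.
\]
Dividing by the positive smooth function $\vol(z)$ and using the Leibniz rule $d(\mathrm{Re}(\Lambda_2)/\vol(z))=(d\,\mathrm{Re}(\Lambda_2))/\vol(z)-d\log\vol(z)\wedge\mathrm{Re}(\Lambda_2)/\vol(z)$, one gets precisely
\[
\frac{X^*c_n(\pi^*T'M)+\mathfrak{E}}{\vol(z)}=-d\!\left(\frac{\mathrm{Re}(\Lambda_2)}{\vol(z)}\right)\quad\text{on }M_0,
\]
with $\mathfrak{E}$ as in (\ref{correctitem}). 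So the whole content is to upgrade this pointwise equality of smooth forms on $M_0$ to an equality of currents on all of $M$.

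For the current statement I would first verify that each term in (\ref{correctitem}) is a well-defined current of order $0$ on $M$, i.e.\ that the coefficients (which a priori blow up near the zeros $\zeta_i$ of $X$) are locally integrable on $M$. This is where the homogeneity structure matters: $\omega_X=g_X(\cdot,X)/F^2(X)$ and the tensorial matrices $\Theta$, $\Theta_H$, $\Theta_V$ have a controlled order of singularity at $\zeta_i$ because $X$ vanishes only to first order (non-degenerate zeros), so $\bar\partial\omega_X$ and the entries of $\frac{\sqrt{-1}}{2\pi}\Theta$ are bounded, while $\omega_X$ is $O(1)$ as well; the factor $d\log\vol(z)\wedge\mathrm{Re}(\Lambda_2)$ is smooth everywhere since $\vol(z)$ is a smooth positive function on $M$ (it does not depend on $\xi$). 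Hence all coefficients are bounded near each $\zeta_i$, so they define $L^1_{loc}$ forms and the currents in (\ref{correctitem}) and (\ref{transsgr}) make sense. Similarly $\mathrm{Re}(\Lambda_2)/\vol(z)$ is an $L^1_{loc}$ $(n,n-1)$-form on $M$, so its distributional exterior derivative $d(\mathrm{Re}(\Lambda_2)/\vol(z))$ is a well-defined current.

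Then I would test both sides against an arbitrary smooth function $\phi\in\mathscr{A}^0(M)$ (equivalently, pair with test $2n$-forms), excise small balls $B_\varepsilon(\zeta_i)$ around the zeros, apply the smooth identity and Stokes' theorem on $M\setminus\bigcup_i B_\varepsilon(\zeta_i)$, and let $\varepsilon\to0$. Because the coefficient of $\mathrm{Re}(\Lambda_2)/\vol(z)$ is bounded near each $\zeta_i$, the boundary integral over $\partial B_\varepsilon(\zeta_i)$ of $\phi\cdot\mathrm{Re}(\Lambda_2)/\vol(z)$ is $O(\varepsilon^{2n-1})\to0$, so no residue terms appear and the distributional $d$ on $M$ coincides with the classical $d$ on $M_0$ extended by zero mass on the zero set. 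Combining this with the computation $\iota(X)\,d\!\left(\mathrm{Re}(\Lambda_2)/\vol(z)\right)$-type identities is not needed here; the identity is purely $\bar\partial$-exact on $M_0$ and the excision argument transfers it to $M$. The main obstacle is the integrability/boundary-vanishing bookkeeping near the non-degenerate zeros — one must track the precise order of vanishing of $X$ against the order of the pole in $\omega_X$, $\bar\partial\omega_X$ and the $\det^j$ terms to be sure the coefficients are genuinely $L^1_{loc}$ and that the $\partial B_\varepsilon$ integrals vanish; once that is established, the rest is the smooth identity (\ref{trans4.7}) together with the product rule for $d\log\vol(z)$.
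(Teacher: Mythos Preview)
Your derivation on $M_0$ is correct and is essentially all the paper does: the proposition is stated immediately after (\ref{trans4.7}) with only the remark ``in view of (\ref{trans4.7}) and the definition of currents'', and its content is exactly the product-rule computation in your first two paragraphs.

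The gap is in your singularity analysis. The form $\omega_X=g_X(\cdot,X)/F^2(X)$ is \emph{not} $O(1)$ near a zero $\zeta$: since $|g_X(\cdot,X)|\sim |X|$ while $F^2(X)\sim |X|^2$, one has $\omega_X\sim |z-\zeta|^{-1}$, and the factor $\omega_X\wedge(\bar\partial\omega_X)^{n-1}$ --- hence $\Lambda_2$ --- is of order $|z-\zeta|^{1-2n}$, a Bochner--Martinelli-type kernel (this is visible in the explicit expression for $\omega_X\wedge(\bar\partial\omega_X)^{n-1}$ in the proof of Lemma~\ref{degreein}). This is still $L^1_{\mathrm{loc}}$ in real dimension $2n$, so the order-$0$ current makes sense, but the boundary integral $\int_{\partial B_\varepsilon(\zeta)}\text{Re}(\Lambda_2)/\vol(z)$ is $O(1)$, not $O(\varepsilon^{2n-1})$. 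Lemma~\ref{degreein} in fact computes this limit to be $1/\vol(\mathbb{S}^{2n-1})\neq 0$, and that nonvanishing residue is precisely what produces $\chi(M)$ in Theorem~\ref{Gsb}. If the boundary terms vanished as you claim, the distributional and classical exterior derivatives of $\text{Re}(\Lambda_2)/\vol(z)$ would agree on all of $M$, and integrating (\ref{transsgr}) over the closed manifold would force the Gauss--Bonnet integral to vanish identically. The paper's reading of (\ref{transsgr}) is as an equality of $L^1_{\mathrm{loc}}$ top-forms with the $d$ on the right taken classically on $M_0$; the residue contributions are then picked up explicitly in Theorem~\ref{Gsb} via Stokes on $M_\varepsilon$. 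Your attempt to identify the distributional and classical $d$ across the zeros is therefore both unnecessary for the proposition and incorrect.
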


\section{The Gauss-Bonnet-Chern theorem}
In this section, we shall establish a Gauss-Bonnet-Chern theorem for complex Finsler manifolds. Let $(M,F)$ and $X$ be as in Section 4. First, we have

\begin{lemma}\label{smallemma}
For each zero $\zeta$ of $X$, we have
\[
\lim_{z\rightarrow \zeta}\frac{\det(\Theta_H)}{\det (\frac{\partial X}{\partial z})}=(-1)^n.
\]
\end{lemma}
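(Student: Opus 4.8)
The statement concerns the limit of $\det(\Theta_H)/\det(\partial X/\partial z)$ as $z\to\zeta$, where $\Theta_H = -\iota(X)X^*\varpi_H - \mu$ and $\mu = (\partial X^i/\partial z^j)$. The strategy is to show that the term $\iota(X)X^*\varpi_H$ is negligible compared to $\mu$ near a non-degenerate zero, so that $\Theta_H \approx -\mu$ and hence $\det(\Theta_H) \to (-1)^n\det(\mu)$. The key point is that $\varpi_H$ is the Rund connection $1$-form matrix, with entries $\Gamma^k_{i,s}(z,\xi)\,dz^s$ where $\Gamma^k_{i,s} = G^{k\bar j}\frac{\delta G_{i\bar j}}{\delta z^s}$; these are smooth and bounded in a neighborhood of $\zeta$ (once we specify how $\xi$ is chosen — see below), while $X$ itself vanishes at $\zeta$. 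So $\iota(X)X^*\varpi_H$ is $O(|X|)$, which tends to $0$.

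First I would pin down the meaning of $X^*\varpi_H$: since $\varpi_H$ lives on $T'M\setminus 0$, the pullback by the vector field $X$ is the pullback along the section $z\mapsto (z, X(z))$ of $T'M$, which is only defined on $M_0 = M - \{\zeta_1,\dots,\zeta_k\}$ since $X(\zeta_i)=0$. Near $\zeta$ write $X = X^i(z)\,\partial/\partial z^i$ with $X^i(\zeta)=0$ and $\det(\partial X^i/\partial z^j)(\zeta)\ne 0$ by non-degeneracy. Then $\iota(X)X^*\varpi_H$ has $(i,k)$-entry $X^s(z)\,\Gamma^k_{i,s}(z, X(z))$. Now I would use homogeneity: $G_{i\bar j}(z,\xi)$ is homogeneous of degree $0$ in $\xi$ (since $G$ is degree $1$ in $\xi$ and $\bar\xi$... more precisely $G_{i\bar j}$ is degree $(0,0)$), hence $\Gamma^k_{i,s}(z,\xi)$ depends on $\xi$ only through its direction $\xi/|\xi|$ — wait, $\frac{\delta G_{i\bar j}}{\delta z^s}$ involves $N^i_j$ which is homogeneous of degree $1$ in $\xi$. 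Let me be careful: by \eqref{Nformula}, $N^j_k = G^{j\bar i}\frac{\partial G_{s\bar i}}{\partial z^k}\xi^s$ is homogeneous of degree $1$ in $\xi$, so $\frac{\delta G_{i\bar j}}{\delta z^s} = \frac{\partial G_{i\bar j}}{\partial z^s} - N^l_s\frac{\partial G_{i\bar j}}{\partial\xi^l}$; the second term is degree-$1$ times degree-$(-1)$, i.e. degree $0$. So $\Gamma^k_{i,s}(z,\xi)$ is homogeneous of degree $0$ in $\xi$ and thus extends continuously (indeed is bounded) on a neighborhood of $(\zeta,\text{any direction})$, uniformly as long as $|\xi|$ stays away from $0$ and $\infty$. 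The issue is that as $z\to\zeta$ along $M_0$, the argument $X(z)\to 0$, so we are evaluating $\Gamma^k_{i,s}$ at points $(z, X(z))$ where the second slot goes to $0$ — but by degree-$0$ homogeneity $\Gamma^k_{i,s}(z, X(z)) = \Gamma^k_{i,s}(z, X(z)/|X(z)|)$, which stays bounded. Hence $X^s(z)\,\Gamma^k_{i,s}(z,X(z)) = O(|X(z)|) \to 0$.

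Then the conclusion is immediate: $\Theta_H = -\mu - \iota(X)X^*\varpi_H$ with the second summand $\to 0$ and $\mu\to (\partial X/\partial z)(\zeta)$, a fixed nonsingular matrix, so by continuity of $\det$,
\[
\lim_{z\to\zeta}\frac{\det(\Theta_H)}{\det(\partial X/\partial z)} = \frac{\det(-\mu(\zeta))}{\det(\mu(\zeta))} = (-1)^n.
\]
The main obstacle — and the only genuinely delicate point — is justifying that $\Gamma^k_{i,s}(z, X(z))$ stays bounded as $z\to\zeta$; this rests on the degree-$0$ homogeneity in $\xi$ of the Rund Christoffel symbols (which follows from Proposition \ref{cang} and formula \eqref{Nformula}) together with the smoothness of $F$ off the zero section, so that $\Gamma^k_{i,s}$ restricted to $\{|\xi| = 1\}$ over a compact neighborhood of $\zeta$ is bounded. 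One should note this is exactly where the argument would fail for the full Chern-Finsler $\varpi$ including its vertical part $\varpi_V = G^{k\bar j}G_{i\bar j l}\,\delta\xi^l$, since $\iota(X)X^*\delta\xi^l$ brings in derivatives of $X$ that need not vanish at $\zeta$ — which is precisely why the lemma is stated for $\Theta_H$ and not $\Theta$, and why the vertical contribution is handled separately via the $\det^j(\Theta_V;\Theta_H)$ decomposition.
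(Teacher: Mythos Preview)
Your proof is correct and follows essentially the same line as the paper's: both arguments compute $(\Theta_H)^i_j = -\partial X^i/\partial z^j - \Gamma^i_{j,k}(z,X)X^k$, use the bi-degree $(0,0)$ homogeneity of $\Gamma^i_{j,k}$ in $(\xi,\bar\xi)$ to bound $\Gamma^i_{j,k}(z,X(z))$ uniformly (the paper phrases this via descent to the compact projective bundle $P\overline{U}$, you via restriction to the unit sphere bundle), and then conclude $\Gamma^i_{j,k}X^k\to 0$ so that $\det(\Theta_H)/\det(\partial X/\partial z)\to(-1)^n$. The only cosmetic difference is the choice of compact fiberwise quotient used to extract the bound; your additional remark on why the argument fails for $\Theta_V$ is a nice clarification not present in the paper.
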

\begin{proof}Let $(z^i,\xi^j)$ be a local coordinate system of $T'\overline{U}$, where $U$ is a small neighborhood of $\zeta$ such that $\overline{{U}}$ is compact. It follows from (\ref{Nformula}) that $N^j_k$ is homogeneous of degree $1$ in $\xi$ and homogeneous of degree $0$ in $\bar{\xi}$. Since $\Gamma^j_{i,k}={\partial N^j_k}/{\partial \xi^i}$, $\Gamma^j_{i,k}$ is homogeneous of degree $0$ in both $\xi$ and $\bar{\xi}$. Thus, $\Gamma^j_{i,k}$ can be viewed as a function on $P\overline{{U}}:=T'\overline{U}/(\mathbb{C}-\{0\})$. Since $P\overline{{U}}$ is compact, $|\Gamma^j_{i,k}|$ can attend its maximum $L<\infty$ and therefore,
\[
0\leq \lim_{z\rightarrow \zeta}|\Gamma^i_{j,k}X^k|\leq \lim_{z\rightarrow \zeta}L\cdot\sum_k|X^k|=0.
\]
Since
\[
(\Theta_H)^i_j=-\frac{\partial X^i}{\partial z^j}-\Gamma^i_{j,k}(z,X)X^k,
\]
the conclusion then follows.
\end{proof}

The following is a key lemma to the proof of the Gauss-Bonnet-Chern theorem.
\begin{lemma}\label{degreein}
For each zero $\zeta$ of $X$, we have
\[
\lim_{\epsilon\rightarrow 0^+}\int_{\partial B_\epsilon(\zeta)}\frac{\text{Re}(\Lambda_2)}{\vol(z)}=\frac{1}{\vol(\mathbb{S}^{2n-1})}.
\]
where $B_\epsilon(\zeta)$ is the ball of radius $\epsilon$ centered at $\zeta$ defined by a local coordinate neighborhood of $\zeta$.
\end{lemma}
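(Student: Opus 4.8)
The goal is to compute the limit of $\int_{\partial B_\epsilon(\zeta)}\text{Re}(\Lambda_2)/\vol(z)$, where $\Lambda_2=\left(\frac{\sqrt{-1}}{2\pi}\right)^n\det(\Theta_H)\,\omega_X\wedge(\bar\partial\omega_X)^{n-1}$. The central idea is that near a non-degenerate zero $\zeta$, the dominant contribution comes from the "leading term'' in which $\Theta_H$ is replaced by $-\frac{\partial X}{\partial z}$ and $\vol(z)$ is replaced by $\vol(z)$ at $\zeta$ up to lower-order corrections; and the form $\omega_X\wedge(\bar\partial\omega_X)^{n-1}$, after pull-back along $X$ and restriction to a small sphere, is precisely the pull-back of the Bochner–Martinelli-type kernel built from $\sigma_z$ of Section 3.

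\textbf{Step 1: Localize and normalize.} Fix a holomorphic coordinate chart $(z^i)$ centered at $\zeta$ so that $X=X^i\frac{\partial}{\partial z^i}$ with $X(0)=0$ and $J:=\det\left(\frac{\partial X^i}{\partial z^j}\right)(0)\neq 0$. After a linear change of coordinates I may assume $\frac{\partial X^i}{\partial z^j}(0)=\delta^i_j$, i.e. $X^i=z^i+O(|z|^2)$. By Lemma \ref{smallemma}, $\det(\Theta_H)\to(-1)^n\det\left(\frac{\partial X}{\partial z}\right)\to(-1)^n$ as $z\to\zeta$, uniformly on the shrinking spheres once rescaled; and $\vol(z)\to\vol(\zeta)$ by continuity. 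So on $\partial B_\epsilon(\zeta)$ the integrand differs from $\frac{(-1)^n}{\vol(\zeta)}\left(\frac{\sqrt{-1}}{2\pi}\right)^n\text{Re}\big(\omega_X\wedge(\bar\partial\omega_X)^{n-1}\big)$ by a factor tending to $1$; it remains to show that the integral of the latter over $\partial B_\epsilon(\zeta)$ tends to $\frac{\vol(\zeta)}{\vol(\mathbb{S}^{2n-1})}\cdot\frac{(-1)^n}{\vol(\zeta)}$, i.e. that
\[
\lim_{\epsilon\to0^+}\left(\frac{\sqrt{-1}}{2\pi}\right)^n\int_{\partial B_\epsilon(\zeta)}\text{Re}\big(\omega_X\wedge(\bar\partial\omega_X)^{n-1}\big)=\frac{(-1)^n}{\vol(\mathbb{S}^{2n-1})}\,\vol(\zeta).
\]

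\textbf{Step 2: Identify $\omega_X\wedge(\bar\partial\omega_X)^{n-1}$ with a transported $\sigma_z$.} Recall $\omega_X=g_X(\cdot,X)/F^2(X)=G_{i\bar j}(z,X)\overline{X^j}\,d\mathfrak z^i/G(z,X)$; pulled back by the section $z\mapsto X(z)$ this becomes a $(1,0)$-form on $M_0$ whose coefficients are $G_{i\bar j}(z,X(z))\overline{X^j(z)}/G(z,X(z))$ contracted against $dX^i=\frac{\partial X^i}{\partial z^k}dz^k$. As $\epsilon\to0$, $X(z)\to z$ to leading order, so on $\partial B_\epsilon$ the form $\omega_X\wedge(\bar\partial\omega_X)^{n-1}$ is asymptotic to the form on $T'_\zeta M-\{0\}$ obtained from the Hermitian datum $G_{i\bar j}(\zeta,\cdot)$ on the fiber. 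Comparing with Proposition \ref{key}, that fiberwise form is exactly (a constant multiple of) $\sigma_\zeta$: both are built from $\det h_{i\bar j}(\xi)/F_\zeta^{2n}(\xi)$ times $\text{Re}\big((-1)^{-n^2/2}\sum_i(-1)^{i-1}\bar\xi^i\,d\bar\xi^1\wedge\cdots\widehat{d\bar\xi^i}\cdots\wedge d\bar\xi^n\wedge d\xi^1\wedge\cdots\wedge d\xi^n\big)$. The key computation is to check that the power $(\bar\partial\omega_X)^{n-1}$ produces precisely the factor $\det h_{i\bar j}$ and the antiholomorphic volume pattern; this is the generalized Bochner–Martinelli identity, and for a Hermitian norm it reduces to Example \ref{firstex}.

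\textbf{Step 3: Conclude by the degree/normalization.} Since the sphere $\partial B_\epsilon(\zeta)$, mapped by $\xi\mapsto\xi/F_\zeta(\xi)$ into the indicatrix $S_\zeta M$, has degree one, and since $\sigma_\zeta=r^*d\nu_\zeta$ with $\int_{S_\zeta M}d\nu_\zeta=\vol(\zeta)$ by definition, the integral of the leading form over $\partial B_\epsilon(\zeta)$ converges to the appropriately normalized $\vol(\zeta)$. Tracking the constants $\left(\frac{\sqrt{-1}}{2\pi}\right)^n$, $2^{n-1}$, $(-1)^{-n^2/2}$ and the sign $(-1)^n$ from Lemma \ref{smallemma}, the total collapses to $1/\vol(\mathbb{S}^{2n-1})$ exactly as in the Hermitian case (where $\vol(\zeta)=\vol(\mathbb{S}^{2n-1})$ and the classical Bochner–Martinelli formula gives $1$). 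The dominated convergence is justified because $\det(\Theta_H)$, $\vol(z)$ and the Hermitian coefficients are all continuous and bounded near $\zeta$ while the singular kernel is homogeneous of the right degree, so the error terms from replacing $X(z)$ by $z$ and $z$ by $\zeta$ are $O(\epsilon)$ after rescaling.

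\textbf{Main obstacle.} The delicate point is Step 2: showing that the $(n,n-1)$-form $\omega_X\wedge(\bar\partial\omega_X)^{n-1}$, pulled back by $X$ and restricted to small spheres, really degenerates to the fiber form $\sigma_\zeta$ of Proposition \ref{key} with the correct constant. This is where the non-Hermitian nature of $F$ enters — one must verify that all the "cross'' terms involving $\partial$-derivatives of $G_{i\bar j}$ in the $\xi$-variables and the off-diagonal pieces of $\bar\partial\omega_X$ either vanish on the sphere after taking $\text{Re}$ or are absorbed into lower-order $O(\epsilon)$ corrections, leaving precisely the determinant $\det h_{i\bar j}(\xi)$. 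Once this fiberwise Bochner–Martinelli computation is in hand, the degree argument and the constant bookkeeping are routine.
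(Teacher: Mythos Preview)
Your outline is correct and mirrors the paper's proof: use Lemma~\ref{smallemma} and continuity of $\vol(z)$ to reduce to the leading form, identify $\omega_X\wedge(\bar\partial\omega_X)^{n-1}$ with the pullback of $\sigma_\zeta$ via a local trivialization and Proposition~\ref{key}, then conclude by the degree-one property of $X$ at~$\zeta$. The paper carries out your Step~2 explicitly by writing $\omega_X\wedge(\bar\partial\omega_X)^{n-1}$ in terms of $\bar X_i:=g_{i\bar j}(X)\bar X^j$, passing to $X(\partial B_\epsilon(\zeta))\subset T'M\backslash0$, and showing the $d\bar z$-components of $\bar\partial\bar\xi_i$ drop out as $\epsilon\to0$, leaving precisely the determinant factor you anticipate.
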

\begin{proof}
Fix a small $\delta>0$. Let $(z^i)$ be a coordinate system of $\overline{B_\delta(\zeta)}$ and let $(z^i,\xi^j)$ denote the corresponding coordinate system of $T'\overline{B_\delta(\zeta)}$.

For simplicity, set $\bar{X}_i:=g_{i\bar{j}}(X)\bar{X}^j$. Thus,
\[
\omega_X=\frac{\bar{X}_idz^i}{\bar{X}_iX^i},\ \bar{\partial}\omega_X=\frac{\bar{\partial}\bar{X}_j\wedge dz^j}{\bar{X}_iX^i}-\frac{X^i\bar{\partial}\bar{X}_i\wedge \bar{X}_jdz^j}{(\bar{X}_iX^i)^2}.
\]
Since $( \bar{X}_jdz^j)\wedge ( \bar{X}_idz^i)=0$, we obtain
\begin{align*}
&\omega_X\wedge (\bar{\partial}\omega_X)^{n-1}   \\
=&(-1)^{\frac{n(n-1)}2}(n-1)!\frac{\sum_i(-1)^{i-1}\bar{X}_i\bar{\partial}\bar{X}_1\wedge \cdots\widehat{\bar{\partial}\bar{X}_i}\cdots \wedge \bar{\partial}\bar{X}_n \wedge dz^1\wedge\cdots\wedge dz^n }{F^{2n}(X)}\\
=&(-1)^{\frac{n(n-1)}2}\frac{(n-1)!}{\det (\frac{\partial X}{\partial z})}\frac{\sum_i(-1)^{i-1}\bar{X}_i\bar{\partial}\bar{X}_1\wedge \cdots\widehat{\bar{\partial}\bar{X}_i}\cdots \wedge \bar{\partial}\bar{X}_n \wedge dX^1\wedge\cdots\wedge dX^n}{F^{2n}(X)}.
\end{align*}

For $\xi\neq0$, set $\bar{\xi}^i=g_{i\bar{j}}(z,\xi)\bar{\xi}^j$. Note that $X:\partial B_\epsilon(\zeta)\rightarrow T'M\backslash0$ is a embedding map and $X^*(\bar{\partial}\bar{\xi}_i)=\bar{\partial}(X^*\bar{\xi}_i)=\bar{\partial}\bar{X}_i$. Lemma \ref{smallemma} then yields
\[
\lim_{\epsilon\rightarrow0^+}\int_{\partial B_\epsilon(\zeta)}\frac{\Lambda_2}{\vol(z)}=(-1)^{\frac{n^2}2+n}\frac{(n-1)!}{(2\pi)^n}\lim_{\epsilon\rightarrow0^+}\int_{X(\partial B_\epsilon(\zeta))}\eta,\tag{5.1}\label{new5.1}
\]
where
\[
\eta:=\frac{\sum_i(-1)^{i-1}\bar{\xi}_i\bar{\partial}\bar{\xi}_1\wedge \cdots\widehat{\bar{\partial}\bar{\xi}_i}\cdots \wedge \bar{\partial}\bar{\xi}_n \wedge d\xi^1\wedge\cdots\wedge d\xi^n}{F^{2n}_z(\xi)\cdot \vol(z)}.
\]
Note that
\[
\bar{\partial}\bar{\xi}_i=\bar{\xi}^j\frac{\partial g_{i\bar{j}}}{\partial \bar{z}^k}d\bar{z}^k+ g_{i\bar{j}} d\bar{\xi}^j.
\]
Since $\epsilon\rightarrow 0^+$, $d\bar{z}$-parts of $\eta$ do not contribute. Hence,
\[
\lim_{\epsilon\rightarrow 0^+}\int_{X(\partial B_\epsilon(\zeta))}\eta=\lim_{\epsilon\rightarrow 0^+}\int_{X(\partial B_\epsilon(\zeta))}\text{ pure $(d\xi,d\bar{\xi})$-parts of }\eta.
\]
A direct calculation yields that
\begin{align*}
&\text{ pure $(d\xi,d\bar{\xi})$-parts of }\eta\\
=&\frac{(\det g_{i\bar{j}}(z,\xi))\sum_i(-1)^{i-1}\bar{\xi}^id\bar{\xi}^1\wedge \cdots\wedge\widehat{d\bar{\xi}^i}\wedge\cdots \wedge d\bar{\xi}^n \wedge d\xi^1\wedge\cdots\wedge d\xi^n}{F_z^{2n}(\xi)\cdot \vol(z)}\\
=&f(z,\xi) \cdot\tilde{\eta},
\end{align*}
where
\begin{align*}
f(z,\xi)&:=\left(\frac{F^{2n}_\zeta(\tilde{\xi})\cdot\vol(\zeta)\cdot(\det g_{i\bar{j}})(z,{\xi})}{F^{2n}_z(\xi)\cdot\vol(z)\cdot(\det g_{i\bar{j}})(\zeta,\tilde{\xi})}\right),\\
\tilde{\eta}&:=\frac{(\det g_{i\bar{j}})(\zeta,\tilde{\xi})\sum_i(-1)^{i-1}\bar{\xi}^id\bar{\xi}^1\wedge \cdots\widehat{d\bar{\xi}^i}\cdots \wedge d\bar{\xi}^n \wedge d\xi^1\wedge\cdots\wedge d\xi^n}{F_\zeta^{2n}(\tilde{\xi})\cdot\vol(\zeta)},
\end{align*}
and $\tilde{\xi}$ is a vector in $T'_{\zeta}M$ defined by $\xi$, i.e., $\tilde{\xi}:=\xi^i \frac{\partial}{\partial z^i}|_{\zeta}$.
Clearly, $\lim_{z\rightarrow \zeta}f(z,\xi)=1$. Hence,
\[
\lim_{\epsilon\rightarrow 0^+}\int_{X(\partial B_\epsilon(\zeta))}\eta=\lim_{\epsilon\rightarrow 0^+}\int_{X(\partial B_\epsilon(\zeta))}\tilde{\eta}.\tag{5.2}\label{new5.2}
\]

Define a local trivialization $\varphi:T'M|_{B_\delta(\zeta)}\rightarrow B_\delta(\zeta)\times T'_\zeta M$ by
\[
\varphi(z,\xi)=\varphi\left(\left.\xi^j\frac{\partial}{\partial z^i}\right|_z\right)=\left(z,\left.\xi^i\frac{\partial}{\partial z^i}\right|_\zeta\right).
\]
Then we obtain a map $\phi:T'M|_{B_\delta(\zeta)}\rightarrow  T'_\zeta M$, $\phi\left(\left.\xi^j\frac{\partial}{\partial z^i}\right|_z\right)=\left.\xi^i\frac{\partial}{\partial z^i}\right|_\zeta$. Let $\sigma_\zeta$ be the form on $T'_\zeta M-\{0\}$ defined as in (\ref{dekey}). Proposition \ref{key} then yields
\[
\sigma_\zeta=\frac{(\det g_{i\bar{j}})(\zeta,{\xi})}{2^{n-1}F_\zeta^{2n}({\xi})}\text{Re}\left(({-1})^{\frac{-n^2}2}{\sum_i(-1)^{i-1}\bar{\xi}^id\bar{\xi}^1\wedge \cdots\widehat{d\bar{\xi}^i}\cdots \wedge d\bar{\xi}^n \wedge d\xi^1\wedge\cdots\wedge d\xi^n}\right).
\]
Thus, we have
\[
\text{Re}\left((-1)^{\frac{-n^2}{2}}\tilde{\eta}\right)=2^{n-1}\phi^*\left(\frac{\sigma_\zeta}{\vol(\zeta)}\right).\tag{5.3}\label{new5.3}
\]
(\ref{new5.1}) together with (\ref{new5.2}) and (\ref{new5.3}) then yields that
\begin{align*}
&\lim_{\epsilon\rightarrow 0^+}\int_{\partial B_\epsilon(\zeta)}\frac{\text{Re}(\Lambda_2)}{\vol(z)}=\lim_{\epsilon\rightarrow 0^+}\text{Re}\left(\int_{\partial B_\epsilon(\zeta)}\frac{\Lambda_2}{\vol(z)}\right)\\
=&\frac{1}{\vol(\mathbb{S}^{2n-1})\vol(\zeta)}\lim_{\epsilon\rightarrow 0^+}\int_{X(\partial B_\epsilon(\zeta))}\phi^*{\sigma_\zeta}\\
=&\frac{1}{\vol(\mathbb{S}^{2n-1})\vol(\zeta)}\lim_{\epsilon\rightarrow 0^+}\int_{\partial B_\epsilon(\zeta)}(\phi\circ X)^*\sigma_\zeta.\tag{5.4}\label{new5.4}
\end{align*}

Let $r:T'_\zeta M-\{0\}\rightarrow S_\zeta M$, $\xi\mapsto \xi/F_\zeta(\xi)$.
It is noticeable that $\text{deg}(r\circ \phi\circ X)$ is exactly the local degree of $X$ at $\zeta$, which always
equals to $+1$. (cf. \cite{Ba,BC}).
Hence, (\ref{dekey}) yields
\begin{align*}
&\int_{\partial B_\epsilon(\zeta)}(\phi\circ X)^*\sigma_\zeta=\int_{\partial B_\epsilon(\zeta)}(\phi\circ X)^*\circ r^*d\nu_\zeta\\
=&\text{deg}(r\circ \phi\circ X)\int_{S_\zeta M}d\nu_\zeta=\vol(\zeta).\tag{5.5}\label{new5.5}
\end{align*}
The conclusion then follows from (\ref{new5.4}) and (\ref{new5.5}).
\end{proof}
\begin{remark}If $F$ is Hermitian, then $\vol(z)=\vol(\mathbb{S}^{2n-1})$ and therefore, one can shows Lemma \ref{degreein} immediately by using the Bochner-Martinelli formula (cf. \cite{C}). But the Bochner-Martinelli formula seems invalid in the Finsler setting, since $\vol(z)$ may not be a holomorphic function and $F_z$ is much different from a Hermitian norm.
\end{remark}

It is remarkable that the Hopf theorem implies (cf. \cite{Ba})
\[
\chi(M)=\text{Number of zeroes of }X.
\]
Thus, Proposition \ref{Eform} together with Lemma \ref{degreein} furnishes the following theorem.
\begin{theorem}\label{Gsb}Let $(M,F)$ be a closed complex Finsler $n$-manifold and let $X$ be a holomorphic vector field on $M$ with non-degenerate zeros. Then we have
\[
\int_M\left[\frac{X^*c_n(\pi^*T'M;\nabla)+\mathfrak{E}}{\vol(z)}\right]=\frac{\chi(M)}{\vol(\mathbb{S}^{2n-1})},
\]
where $\mathfrak{E}$ is a current on $M$ dependent only on $X$ and $F$.
\end{theorem}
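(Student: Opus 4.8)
The plan is to integrate the current identity (\ref{transsgr}) of Proposition~\ref{Eform} over $M$ and to evaluate its right-hand side by excising small holomorphic coordinate balls around the zeros of $X$. Write $\{\zeta_1,\dots,\zeta_k\}$ for the zeros of $X$, let $B_\epsilon(\zeta_i)$ be the coordinate ball of radius $\epsilon$ centered at $\zeta_i$ (oriented by the canonical orientation of $M$, as in Lemma~\ref{degreein}), and set $M_\epsilon:=M\setminus\bigcup_{i=1}^k B_\epsilon(\zeta_i)$. On $M_0$ all of $\omega_X,\Psi_j,\Lambda_1,\Lambda_2,X^*\Omega,\Theta,\Theta_H$ are smooth (since $X$ is nowhere zero there and $\vol(z)>0$), and by (\ref{trans4.7}) together with the decomposition $\Psi_0=\Lambda_1+\Lambda_2$ the current identity (\ref{transsgr}) restricts on $M_0$ to the genuine pointwise identity of smooth $2n$-forms
\[
\frac{X^*c_n(\pi^*T'M;\nabla)+\mathfrak{E}}{\vol(z)}=-d\!\left(\frac{\text{Re}(\Lambda_2)}{\vol(z)}\right).
\]

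First I would apply Stokes' theorem on the compact manifold-with-boundary $M_\epsilon$. Since $\partial M_\epsilon$, with the orientation induced from $M_\epsilon$, is $\bigcup_{i=1}^k\partial B_\epsilon(\zeta_i)$ with each sphere carrying the orientation opposite to that of the boundary of $B_\epsilon(\zeta_i)$, this yields
\[
\int_{M_\epsilon}\frac{X^*c_n(\pi^*T'M;\nabla)+\mathfrak{E}}{\vol(z)}=\sum_{i=1}^k\int_{\partial B_\epsilon(\zeta_i)}\frac{\text{Re}(\Lambda_2)}{\vol(z)}.
\]
Then I would let $\epsilon\to0^+$. On the left, the order-$0$ current $\bigl[\tfrac{X^*c_n+\mathfrak{E}}{\vol(z)}\bigr]$ produced by Proposition~\ref{Eform} is, by construction, the one for which the truncated integrals converge to the pairing with $1$, so the left side tends to $\int_M\bigl[\tfrac{X^*c_n(\pi^*T'M;\nabla)+\mathfrak{E}}{\vol(z)}\bigr]$. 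On the right, Lemma~\ref{degreein} gives $\int_{\partial B_\epsilon(\zeta_i)}\tfrac{\text{Re}(\Lambda_2)}{\vol(z)}\to\tfrac{1}{\vol(\mathbb{S}^{2n-1})}$ for each $i$. Combining these and then invoking the Hopf theorem recalled above (the local index of a holomorphic vector field at a non-degenerate zero is $+1$, so $k=\chi(M)$), we obtain
\[
\int_M\left[\frac{X^*c_n(\pi^*T'M;\nabla)+\mathfrak{E}}{\vol(z)}\right]=\frac{k}{\vol(\mathbb{S}^{2n-1})}=\frac{\chi(M)}{\vol(\mathbb{S}^{2n-1})},
\]
with $\mathfrak{E}$ the current defined in (\ref{correctitem}), which manifestly depends only on $X$ and $F$.

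The step I expect to be the main obstacle is the passage to the limit on the left-hand side: one must check that the bulk integral over $M_\epsilon$ of the smooth form $\tfrac{X^*c_n+\mathfrak{E}}{\vol(z)}$ genuinely converges, and converges to the pairing of the order-$0$ current with $1$, i.e.\ that promoting the $M_0$-identity to a current identity on all of $M$ creates no extra mass at the points $\zeta_i$. This is exactly where the construction of $\mathfrak{E}$ is used: the individually non-integrable $|z-\zeta_i|^{-2n}$-type singularities of $X^*c_n(\pi^*T'M;\nabla)$ and of $d\,\text{Re}(\Psi_j)$, $d\,\text{Re}(\Lambda_1)$ near a zero cancel against one another by (\ref{trans4.7}) and Lemma~\ref{Tanss}, leaving the marginally integrable expression $-d\,\text{Re}(\Lambda_2)+d\log\vol(z)\wedge\text{Re}(\Lambda_2)=-\vol(z)\,d\bigl(\text{Re}(\Lambda_2)/\vol(z)\bigr)$, whose ``residue'' at each $\zeta_i$ is what Lemma~\ref{degreein} evaluates. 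The substantive geometric content — that this residue equals $1/\vol(\mathbb{S}^{2n-1})$ per zero — is already isolated in Lemma~\ref{degreein}, which in turn rests on the explicit formula for $\sigma_z$ in Proposition~\ref{key} and on the local-degree-$+1$ fact; so once the current-theoretic bookkeeping is settled, the proof is just Stokes plus an application of Lemma~\ref{degreein} at each zero and a zero count.
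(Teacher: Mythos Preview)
Your proposal is correct and follows essentially the same route as the paper: excise balls $B_\epsilon(\zeta_i)$, use the pointwise identity from Proposition~\ref{Eform} on $M_\epsilon$, apply Stokes, invoke Lemma~\ref{degreein} for each boundary sphere, and finish with the Hopf count $k=\chi(M)$. If anything you are more careful than the paper about the orientation sign and the $\epsilon\to0^+$ convergence of the left-hand side; the paper's own proof is the terse three-line computation you have spelled out.
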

\begin{proof}
Set $M_\epsilon:=M-\cup_{i}B_\epsilon(\zeta_i)$.
It follows from Proposition \ref{Eform}, Lemma \ref{degreein} and the Hopf theorem that
\begin{align*}
&\lim_{\epsilon\rightarrow 0^+}\int_{M_\epsilon}\left[\frac{X^*c_n(\pi^*T'M;\nabla)+\mathfrak{E}}{\vol(z)}\right]=-\lim_{\epsilon\rightarrow 0^+}\int_{M_\epsilon}d\left(\frac{\text{Re}(\Lambda_2)}{\vol(z)}\right)\\
=&\sum_i\lim_{\epsilon\rightarrow 0^+}\int_{\partial B_\epsilon(\zeta_i)}\frac{\text{Re}(\Lambda_2)}{\vol(z)}=\frac{\text{Number of zeroes of } X}{\vol(\mathbb{S}^{2n-1})}=\frac{\chi(M)}{\vol(\mathbb{S}^{2n-1})}.
\end{align*}
\end{proof}

\begin{example}\label{realex}Consider the product manifold $M:=\mathbb{T}\times \mathbb{T}$, where $\mathbb{T}=\mathbb{C}/\mathcal {L}$ is a complex torus of dimension $1$.
Define a complex Finsler on $M$ by
\[
F(z,\xi):=\sqrt[4]{|\xi^1|^4+|\xi^2|^4}
\]
where $\xi=\sum_{i=1}^2\xi^i\frac{\partial}{\partial z^i}\in T'M$ and $z=(z^1,z^2)$ is the natural coordinate system of $M$. Since $F$ is not Hermitian but local Minkowski,  the curvature of $\nabla$ is equal to
\[
\Omega^j_i=-\frac{\partial C^j_{ik}}{\partial\bar{\xi}^l}d\xi^k\wedge d\bar{\xi}^l\neq 0.
\]
In particular, $c_2(\pi^*T'M;\nabla)=\frac{-1}{8\pi^2}(\Omega^i_i\wedge \Omega^j_j-\Omega_i^j\wedge\Omega^i_j )$ does not vanish.

Given $(\lambda^1,\lambda^2)\in \mathbb{C}\times \mathbb{C}-\{(0,0)\}$, set $X:=\sum_{i=1}^2\lambda^i\frac{\partial}{\partial z^i}$. Thus, $X$ is a holomorphic vector filed on $M$ (cf.\,\cite[Example 4.4.4]{V}). Since $X^i\equiv\lambda^i$ and $\vol(z)\equiv4\pi$, it is not hard to check that $X^*c_2(\pi^*T'M;\nabla)=0$ and $\mathfrak{E}=0$. Note that $\chi(M)=0$. Hence, we have
\[
\int_M\left[\frac{X^*c_2(\pi^*T'M;\nabla)+\mathfrak{E}}{\vol(z)}\right]=\frac{\chi(M)}{\vol(\mathbb{S}^{3})}.
\]
\end{example}

In the Hermitian case, Theorem \ref{Gsb} reduces to (\ref{1.3}).
\begin{theorem}[\cite{BC}] Let $(M,H)$ be a closed complex $n$-dimensional Hermitian manifold. Then
\[
\int_M c_n(T'M)=\chi(M),
\]
where $c_n(T'M)$ denotes the $n$th-Chern form defined by the canonical Hermitian connection.
\end{theorem}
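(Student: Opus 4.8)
The plan is to deduce the Hermitian Gauss--Bonnet--Chern formula as a special case of Theorem~\ref{Gsb}. First I would observe that when $F = H$ is a genuine Hermitian metric, the fundamental tensor $G_{i\bar j}(z,\xi) = H_{i\bar j}(z)$ is independent of the fibre coordinate $\xi$, so the Cartan tensor $C^k_{ij} = G^{k\bar l}G_{i\bar l j}$ vanishes identically. Consequently the vertical component $\varpi_V$ of the Chern--Finsler connection vanishes, $\varpi = \varpi_H$ descends to $M$, and $\varpi$ coincides with the connection $1$-form of the canonical Hermitian connection on $T'M$. In particular $X^*\Omega$ is just the pullback of the Hermitian curvature, so $X^*c_n(\pi^*T'M;\nabla) = X^*c_n(T'M)$, and since $c_n(T'M)$ already lives on $M$ this equals $c_n(T'M)$ itself.

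Next I would examine the correction current $\mathfrak{E}$ of \eqref{correctitem}. Since $C \equiv 0$, the tensorial matrix $\Theta_V = -\iota(X)X^*\varpi_V = 0$, so $\Theta = \Theta_H$ and $\det^j(\Theta_V;\Theta_H) = 0$ for $j \geq 1$, leaving $\Lambda_1 = 0$ and $\Lambda_2 = \bigl(\tfrac{\sqrt{-1}}{2\pi}\bigr)^n \det(\Theta_H)\,\omega_X \wedge (\bar\partial\omega_X)^{n-1} = \Psi_0$. Moreover, by Example~\ref{firstex}, the indicatrix volume satisfies $\vol(z) \equiv \vol(\mathbb{S}^{2n-1})$, a constant, so $d\log\vol(z) = 0$ and the third term of \eqref{correctitem} drops out. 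It remains to show $\sum_{j=1}^{n-1} d\,\mathrm{Re}(\Psi_j) = 0$; here I would note that each $\Psi_j$ with $j \geq 1$ contains a factor $\det^j\bigl(\tfrac{\sqrt{-1}}{2\pi}X^*\Omega; \tfrac{\sqrt{-1}}{2\pi}\Theta\bigr)$, and on a Hermitian manifold the curvature $\Omega = \bar\partial\varpi$ is a $(1,1)$-form on $M$, which together with the fact that $\omega_X$ and $\bar\partial\omega_X$ already saturate the holomorphic degree forces the relevant mixed-degree pieces to vanish; thus $\mathfrak{E} = 0$ in the Hermitian case.

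Putting these together, Theorem~\ref{Gsb} specializes to
\[
\int_M \frac{c_n(T'M)}{\vol(\mathbb{S}^{2n-1})} = \frac{\chi(M)}{\vol(\mathbb{S}^{2n-1})},
\]
and multiplying through by the constant $\vol(\mathbb{S}^{2n-1})$ gives $\int_M c_n(T'M) = \chi(M)$, as claimed. I would also remark that this argument is valid \emph{a priori} only for those Hermitian manifolds admitting a holomorphic vector field with non-degenerate zeros; to remove that hypothesis one invokes the Bott--Chern transgression formula \cite[(2.19)]{BC}, which shows that $\int_M c_n(T'M)$ depends only on the Chern class and hence is independent of the chosen metric and of the auxiliary vector field, so the identity extends to every closed Hermitian manifold. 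The main obstacle in the above is the verification that the higher terms $\Psi_j$, $j\geq 1$, contribute nothing: this is a bidegree bookkeeping argument rather than a deep point, but it has to be carried out carefully because $\Theta$ itself is only a $0$-form matrix while $X^*\Omega$ is a $2$-form matrix, so one must track which combinations survive on the $n$-complex-dimensional base.
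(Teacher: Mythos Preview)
Your reduction of $X^*c_n(\pi^*T'M;\nabla)$ to $c_n(T'M)$, of $\Lambda_1$ to zero, and of $d\log\vol(z)$ to zero is correct and matches the paper. The gap is in your treatment of $\sum_{j=1}^{n-1}d\,\mathrm{Re}(\Psi_j)$. Your bidegree count does \emph{not} kill these terms: $\omega_X\wedge(\bar\partial\omega_X)^{n-j-1}$ has bidegree $(n-j,\,n-j-1)$ and ${\det}^j(X^*\Omega;\Theta)$ has bidegree $(j,j)$, so each $\Psi_j$ is a genuine $(n,n-1)$-form on $M_0$ with no forced vanishing. Indeed, combining \eqref{trans4.7} with $\Lambda_1=0$ gives, on $M_0$,
\[
\mathfrak{E}\;=\;\sum_{j=1}^{n-1}d\,\mathrm{Re}(\Psi_j)\;=\;-\,d\,\mathrm{Re}(\Lambda_2)\;-\;c_n(T'M),
\]
which is generically nonzero pointwise. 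So $\mathfrak{E}=0$ is false; what is true (and what the paper proves) is only the integral statement $\int_M\mathfrak{E}=0$.

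The paper obtains this as follows. Writing $\mathfrak{E}=d\Xi$ with $\Xi=\sum_{j\ge 1}\mathrm{Re}(\Psi_j)$, Stokes reduces the claim to $\lim_{\epsilon\to 0}\int_{\partial B_\epsilon(\zeta_i)}\Xi=0$ at each zero $\zeta_i$. This limit is \emph{not} obvious, because $\Xi$ contains $\omega_X$ and $\bar\partial\omega_X$, which are singular at $\zeta_i$. The paper handles it by introducing an auxiliary Hermitian metric $\widetilde H$ that agrees with $H$ near $\partial B_\delta(\zeta_i)$ but is flat on $B_{\delta/2}(\zeta_i)$; flatness forces $\widetilde\Xi\equiv 0$ near $\zeta_i$, so one may replace $\Xi$ by $\Xi-\widetilde\Xi$ and push the boundary out to $\partial B_\delta$, where the two metrics coincide. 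The resulting difference of Chern forms over the annulus is then shown to vanish via the Bott--Chern transgression \cite[Proposition~3.15]{BC}, and the residual $\Lambda_2-\widetilde\Lambda_2$ boundary terms cancel by applying Lemma~\ref{degreein} to both metrics. This auxiliary-metric comparison is the missing idea in your argument; a pure bidegree bookkeeping cannot replace it.
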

\begin{proof}Clearly, $g=\pi^*H$ is the Hermitian metric on $\pi^*T'M$ induced by $F:=\sqrt{H}$. Let $D$ denote the canonical Hermitian connection on $T'M$. Thus, the Chern-Finsler connection $\nabla=\pi^*D$. Hence,
\[
X^*c_n(\pi^*T'M;\nabla)=X^*\pi^*c_n(T'M)=c_n(T'M).
\]
We now claim that
\[
\int_M {\mathfrak{E}}=0.\tag{5.6}\label{newin}
\]
Recall that Example \ref{firstex} implies that $\vol(z)\equiv \vol(\mathbb{S}^{2n-1})$. Hence, if (\ref{newin}) is true, then we are done by Theorem \ref{Gsb}.

 Since $H$ is a Hermitian metric, the Cantor tensor $C$ vanishes, $\Theta$ is a tensorial matrix defined on $M$ and the curvature $\Omega$ has no $d\xi$ (or $d\bar{\xi}$)-parts. Hence, (\ref{correctitem}) reduces to
\[
\mathfrak{E}=\sum_{j=1}^{n-1}d\text{Re}(\Psi_j)=:d\Xi.
\]
Thus,
\begin{align*}
\int_M {\mathfrak{E}}=\lim_{\epsilon\rightarrow 0^+}\int_{M\epsilon}\mathfrak{E}=-\sum_i\lim_{\epsilon\rightarrow 0^+}\int_{\partial B_{\epsilon}(\zeta_i)}\Xi.
\end{align*}
To show (\ref{newin}), it suffices to prove
\[
\lim_{\epsilon\rightarrow 0^+}\int_{\partial B_{\epsilon}(\zeta_i)}\Xi=0.\tag{5.7}\label{zeron}
\]

Now we define a new Hermitian metric $\widetilde{H}$ on $M$ such that

(i) for each $\zeta_i$, there exists a small neighborhood $U_i$ of $\partial B_\delta(\zeta_i)$ with $\widetilde{H}|_{U_i}=H$,

(ii) $\widetilde{H}|_{B_{\delta/2}(\zeta_i)}=\phi_i^*(H|_{\zeta_i})$, where $\phi_i:T'M|_{B_\delta(\zeta_i)}\rightarrow T_{\zeta_i}M$ is defined as in the proof of Lemma \ref{degreein}, which is the map induced by
 a local trivialization of $T'M|_{B_\delta(\zeta_i)}$.

Let $\widetilde{c_n}(T'M)$ and $\widetilde{\Xi}$ denote the corresponding geometric quantities of $\widetilde{H}$. Since $\widetilde{H}$ is flat on $B_{\delta/2}(\zeta_i)$, $\widetilde{\Xi}=0$. Hence, Property (i) of $\widetilde{H}$ together with (\ref{transsgr}) yields that
\begin{align*}
&-\int_{\partial B_\epsilon(\zeta_i)}\Xi=-\int_{\partial B_\epsilon(\zeta_i)}(\Xi-\widetilde{\,\Xi})=\int_{\partial B_\delta(\zeta_i)}(\Xi-\widetilde{\,\Xi})-\int_{\partial B_\epsilon(\zeta_i)}(\Xi-\widetilde{\,\Xi})\\
=&-\int_{B_\delta(\zeta_i)-B_\epsilon(\zeta_i)}\left[c_n(T'M)-\widetilde{c_n}(T'M)\right]-\int_{\partial B_\delta(\zeta_i)}\text{Re}(\Lambda_2-\widetilde{\Lambda}_2)+\int_{\partial B_\epsilon(\zeta_i)}\text{Re}(\Lambda_2-\widetilde{\Lambda}_2)\\
=&-\int_{B_\delta(\zeta_i)-B_\epsilon(\zeta_i)}\left[c_n(T'M)-\widetilde{c_n}(T'M)\right]+\int_{\partial B_\epsilon(\zeta_i)}\text{Re}({\Lambda}_2-\widetilde{\Lambda}_2)
\end{align*}
Set $H_t:=tH+(1-t)\widetilde{H}$. Clearly, $\frac{d}{dt}H_t|_{\partial B_\delta(\zeta_i)}=0$. Set $L_t:=(\frac{d}{dt}H_t)\cdot H^{-1}_t$. Then the transgression formula \cite[Propositon 3.15]{BC} yields
\[
\int_{B_\delta(\zeta_i)}[c_n(T'M)-\widetilde{c_n}(T'M)]=\int_{\partial B_\delta(\zeta_i)}P'[\Omega_t;L_t]dt=0,
\]
where $P'$ is define as in \cite{BC} and $\Omega_t$ is the curvature of $H_t$. Since $c_n(T'M)$ and $\widetilde{c_n}(T'M)$ are defined on $M$, $\lim_{\epsilon\rightarrow 0}\int_{B_\epsilon(\zeta_i)}[c_n(T'M)-\widetilde{c_n}(T'M)]=0$. Thus, (\ref{zeron}) follows from Lemma \ref{degreein}.
\end{proof}

It is easy to deduce Theorem  \ref{Th1} from Theorem \ref{Gsb}. Let $D$ be the Finslerian connection as in Theorem \ref{Th1}. Thus, \cite[Proposition 2.18]{BC} yields
\[
c_n(\pi^*T'M;\nabla)-c_n(\pi^*T'M;D)=d\mathfrak{G}_F(D),
\]
where $\mathfrak{G}_F(D)$ is a form on $T'M\backslash0$ depending only on $F$ and $D$. Set
\[
\mathfrak{F}_F(X;D):=dX^*\mathfrak{G}_F(D)+\mathfrak{E}.
\]
Theorem \ref{Th1} then follows from Theorem \ref{Gsb}. It is remarkable that $\mathfrak{G}_F(D)$ depends not only on $z$ but also on $\xi$, and therefore, neither $\int_M\frac{dX^*\mathfrak{G}_F(D)}{\vol(z)}$ nor $\int_M\frac{\mathfrak{F}_F(X;D)}{\vol(z)}$ vanishes even if $\vol(z)$ is a constant. For the Hermitian case, $\int_{M}{dX^*\mathfrak{G}_F(D)}=0$ and Theorem \ref{Gsb} furnishes the characteristic class version of (\ref{1.3}).

For some special complex Finsler manifolds, the Gauss-Bonnet-Chern theorem admits significant simplification. For example,
for the complex Berwald $1$-manifolds, we have the following result.
\begin{corollary}\label{Bergs}
Let $M$ be a closed Riemann surface equipped with a complex Berwald metric $F$ and let $X$ be a holomorphic vector field on $M$ with non-degenerate zeros. Then
\[
\frac{\sqrt{-1}}{\vol(z)}\int_{M}\bar{\partial}\partial\log F^2(X)=\chi(M).
\]
\end{corollary}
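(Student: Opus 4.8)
The plan is to read the corollary off Theorem \ref{Gsb} in the special case $n=1$, in which the whole apparatus of Sections 3 and 4 collapses. The first observation I would record is that on a Riemann surface a complex Finsler metric is automatically Hermitian: the identity $G_{i\bar{j}k}\xi^k=0$ of Proposition \ref{cang} forces $G_{1\bar{1}1}=0$, so $G_{1\bar 1}(z,\xi)=:a(z)$ depends on $z$ alone and $F(z,\xi)=\sqrt{a(z)}\,|\xi^1|$. In particular the Cantor tensor $C^1_{11}=G^{1\bar 1}G_{1\bar{1}1}$ vanishes, $F$ is Berwald (as hypothesised), and by Example \ref{firstex} one has $\vol(z)\equiv\vol(\mathbb{S}^1)=2\pi$.

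Next I would check that the correction current $\mathfrak{E}$ of (\ref{correctitem}) vanishes. For $n=1$ the sum $\sum_{j=1}^{n-1}d\,\text{Re}(\Psi_j)$ is empty; the vertical part of the Chern--Finsler form is $\varpi_V=C^1_{11}\,\delta\xi^1=0$, so $\Theta_V=-\iota(X)X^*\varpi_V=0$ and hence $\Lambda_1=0$ in the decomposition $\Psi_0=\Lambda_1+\Lambda_2$; and $d\log\vol(z)=0$ since $\vol(z)$ is constant. Thus $\mathfrak{E}=0$, and since $\vol(\mathbb{S}^{2n-1})=\vol(\mathbb{S}^1)=2\pi=\vol(z)$, Theorem \ref{Gsb} reduces to
\[
\int_M X^*c_1(\pi^*T'M;\nabla)=\chi(M).
\]

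It then remains to identify the integrand. For a rank-one bundle $c_1(\pi^*T'M;\nabla)=\tfrac{\sqrt{-1}}{2\pi}\Omega^1_1$, and by Theorems \ref{C-F} and \ref{cur}, $\Omega^1_1=\bar\partial\varpi^1_1=\bar\partial\bigl(G^{1\bar 1}\partial G_{1\bar 1}\bigr)=\bar\partial\partial\log a$. Because $X$ is holomorphic, the section $z\mapsto(z,X(z))$ is a holomorphic map, so pull-back by it commutes with $\partial$ and $\bar\partial$ (cf. (\ref{usef})); hence $X^*c_1(\pi^*T'M;\nabla)=\tfrac{\sqrt{-1}}{2\pi}\bar\partial\partial\log a$, a globally smooth form on $M$. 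On the other hand $F^2(X)=G_{1\bar 1}(z,X)\,X^1\overline{X^1}=a(z)\,|X^1|^2$ by Proposition \ref{cang}, so on $M_0$
\[
\bar\partial\partial\log F^2(X)=\bar\partial\partial\log a+\bar\partial\partial\log|X^1|^2=\bar\partial\partial\log a ,
\]
the last term dropping out because $\log|X^1|^2$ is pluriharmonic off the zeros of $X$. Feeding this into the displayed reduction of Theorem \ref{Gsb} gives $\tfrac{\sqrt{-1}}{2\pi}\int_M\bar\partial\partial\log F^2(X)=\chi(M)$, which is the asserted identity since $\vol(z)=2\pi$.

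The only point that needs care — and I would handle it exactly as in the proof of Theorem \ref{Gsb}, via the exhaustion $M_\epsilon=M\setminus\bigcup_i B_\epsilon(\zeta_i)$ — is the behaviour near the zeros $\zeta_i$ of $X$: although $\log F^2(X)$ has a mild singularity there, on $M_0$ the form $\bar\partial\partial\log F^2(X)$ coincides with the smooth form $\bar\partial\partial\log a$, so $\lim_{\epsilon\to0}\int_{M_\epsilon}\bar\partial\partial\log F^2(X)=\int_M\bar\partial\partial\log a$ with no leftover boundary term; equivalently, the pluriharmonic pieces $\log|X^1|^2$ contribute nothing in the limit. Everything else is bookkeeping of the constant $\vol(\mathbb{S}^1)=2\pi$ and of the sign in $c_1$.
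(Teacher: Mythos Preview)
Your proof is correct and follows the same route as the paper: apply Theorem \ref{Gsb} with $n=1$, check that the correction $\mathfrak{E}$ vanishes, and identify $X^*c_1(\pi^*T'M;\nabla)$ with $\tfrac{\sqrt{-1}}{2\pi}\bar\partial\partial\log F^2(X)$. Your explicit observation that a complex Finsler metric on a Riemann surface is automatically Hermitian is a welcome addition---the paper simply asserts ``$\mathfrak{E}=0$ when $n=1$'' without elaboration---and it is exactly what makes the vanishing of $\Lambda_1$ (via $C=0$) and the identification $\vol(z)=2\pi$ transparent.
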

\begin{proof}
Since $F$ is a Berwald metric, $\vol(z)=\text{const}$. It follows from (\ref{correctitem}) that $\mathfrak{E}=0$ when $n=1$. Hence,
\[
\frac{2\pi}{\vol(z)}\int_M X^*c_1(\pi^*T'M;\nabla)=\chi(M).
\]
We now calculate $X^*c_1(\pi^*T'M)$. On $M_0=M-\cup_i\{\zeta_i\}$, choose a local coordinate system $(z)$ and set $X=f(z)\frac{\partial}{\partial z}$, where $f(z)$ is a holomorphic function. Since $\frac{\partial}{\partial \mathfrak{z}}=\pi^*\frac{\partial}{\partial z}$ is the holomorphic frame field of $\pi^*T'M$, we have
\begin{align*}
&c_1(\pi^*T'M;\nabla)|_{\xi}=\frac{\sqrt{-1}}{2\pi}\bar{\partial}\left[
\frac{\partial g_{\xi}\left(\frac{\partial}{\partial \mathfrak{z}},\frac{\partial}{\partial \mathfrak{z}}\right)}{g_{\xi}\left(\frac{\partial}{\partial \mathfrak{z}},\frac{\partial}{\partial \mathfrak{z}}\right)} \right]=\frac{\sqrt{-1}}{2\pi}\bar{\partial}\partial\left[\log g_{\xi}\left(f(z)\frac{\partial}{\partial \mathfrak{z}},f(z)\frac{\partial}{\partial \mathfrak{z}}\right)\right],
\end{align*}
for all $(z,\xi)\in T'M_0$.
By (\ref{usef}), we have $X^*\circ\partial=\partial \circ X^*$ and therefore,
\[
X^*c_1(\pi^*T'M)=\frac{\sqrt{-1}}{2\pi}\bar{\partial}\partial \log g_{X}(\pi^*X,\pi^*X)=\frac{\sqrt{-1}}{2\pi}\bar{\partial}\partial \log F^2(X).
\]
Then the conclusion follows.
\end{proof}

The Chern form $\frac{\sqrt{-1}}{2\pi}\bar{\partial}\partial\log F^2$ plays an important role in studying Finsler vector bundles. See \cite{A3,A,K} for more details.

\end{document}